\def\R{\textrm{I\kern-0.21emR}}
\def\N{\textrm{I\kern-0.21emN}}
\renewcommand{\geq}{\geqslant}
\renewcommand{\leq}{\leqslant}
\renewcommand{\geq}{\geqslant}
\renewcommand{\leq}{\leqslant}
\newcommand {\ddx} {{\rm d}x}
\newcommand {\dd} {{\rm d}}
\DeclareMathOperator\supp{supp}
\newcommand {\beq} {\begin{equation}}
\newcommand {\eeq} {\end{equation}}
\newcommand {\e}  {\varepsilon}
\newcommand {\Chi} {{\bf \raise 2pt \hbox{$\chi$}} }
\newcommand {\sgn} { {\rm sgn} }
\newtheorem{theorem}{Theorem}  
\newtheorem{proposition}{Proposition}
\newtheorem{lemma}{Lemma}
\theoremstyle{definition}\newtheorem{remark}{Remark}
\begin{document}
 \title{Asymptotic analysis of selection-mutation models in the presence of multiple fitness peaks 
}

	\author{Tommaso Lorenzi\thanks{School of Mathematics and Statistics, University of St Andrews, Scotland (tl47@st-andrews.ac.uk).} \and
	Camille Pouchol\thanks{Department of Mathematics, KTH Royal institute of Technology, SE-100 44 Stockholm, Sweden (pouchol@kth.se)} \thanks{Sorbonne Universit\'e, CNRS UMR 7598, Laboratoire Jacques-Louis Lions, F-75005, Paris, France}}

\date{}

         \pagestyle{myheadings} \markboth{Asymptotic analysis of phenotype-structured models in the presence of multiple fitness peaks}{T. Lorenzi and C. Pouchol} \maketitle

          \begin{abstract}
We study the long-time behaviour of phenotype-structured models describing the evolutionary dynamics of asexual populations whose phenotypic fitness landscape is characterised by multiple peaks. First we consider the case where phenotypic changes do not occur, and then we include the effect of heritable phenotypic changes. In the former case the model is formulated as an integrodifferential equation for the phenotype distribution of the individuals in the population, whereas in the latter case the evolution of the phenotype distribution is governed by a non-local parabolic equation whereby a linear diffusion operator captures the presence of phenotypic changes. We prove that the long-time limit of the solution to the integrodifferential equation is unique and given by a measure consisting of a weighted sum of Dirac masses centred at the peaks of the phenotypic fitness landscape. We also derive an explicit formula to compute the weights in front of the Dirac masses. Moreover, we demonstrate that the long-time solution of the non-local parabolic equation exhibits a qualitatively similar behaviour in the asymptotic regime where the diffusion coefficient modelling the rate of phenotypic change tends to zero. However, we show that the limit measure of the non-local parabolic equation may consist of less Dirac masses, and we provide a sufficient criterion to identify the positions of their centres. Finally, we carry out a detailed characterisation of the speed of convergence of the integral of the solution (\emph{i.e.} the population size) to its long-time limit for both models. Taken together, our results support a more in-depth theoretical understanding of the conditions leading to the emergence of stable phenotypic polymorphism in asexual populations. 
          \end{abstract}


\section{Introduction}
\label{Introduction}
Phenotype-structured models formulated as integrodifferential equations (IDEs) or non-local partial differential equations (PDEs) have been increasingly used as a theoretical framework to study evolutionary dynamics in a variety of asexual populations~\cite{alfaro2017effect,almeida2019evolution,bootsma2012modeling,bouin2014travelling,bouin2012invasion,burger1996stationary,busse2016mass,calsina2007asymptotic,calsina2005stationary,calsina2004small,chisholm2016evolutionary,chisholm2015emergence,delitala2013mathematical,domschke2017structured,iglesias2018long,kimura1965stochastic,lorenzi2019structured,lorenzi2016tracking,lorenzi2015dissecting,lorenzi2015mathematical,lorenzi2018role,lorz2015modeling,lorz2013populational,magal2000mutation,Nordmann2018,olivier2017combination,Perthame2006,pouchol2018asymptotic}. In these models, the phenotypic state of each individual is represented by a continuous real variable $x$, and the phenotypic distribution of the individuals within the population at a given time $t \geq 0$ is described by a function $n(t,x)~\geq~0$. In many scenarios of biological and ecological interest one can assume $x \in \Omega$, where $\Omega$ is a smooth bounded domain of $\mathbb{R}^{d}$, $d \geq 1$. 

We focus here on the case where, in the absence of phenotypic changes, the evolution of the population density function $n(t,x)$ is governed by an IDE of the form
\beq
\label{IDE}
\frac{\partial n}{\partial t}  =  R(x,\rho(t)) \, n, \quad n \equiv n(t,x), \quad (t,x) \in (0,\infty) \times \overline{\Omega}.
\eeq
The function $R(x,\rho(t))$ represents the net per capita growth rate of individuals in the phenotypic state $x$, under the environmental conditions determined by the population size 
\beq
\label{rhoIDE}
\rho(t) := \int_\Omega n(t,x) \, \ddx,
\eeq
and models the effect of natural selection. In fact, depending on the sign of $R(x,\rho(t))$ the number density of individuals in the phenotypic state $x$ will either grow or decay at time~$t$. The function $R(x,\rho(t))$ can thus be seen as the phenotypic fitness landscape of the population~\cite{fragata2018evolution,poelwijk2007empirical}.  

In the ecological and biological scenarios whereby the effect of heritable, spontaneous phenotypic changes need to be taken into account, a linear diffusion operator can be included in the IDE~\eqref{IDE}. This leads to a non-local parabolic PDE of the form
\beq
\label{PDE}
\displaystyle{\frac{\partial n}{\partial t} = R(x,\rho(t)) \, n + \beta \Delta n, \quad n \equiv n(t,x), \quad (t,x) \in (0,\infty) \times \Omega},
\eeq
where the diffusion coefficient $\beta > 0$ models the rate of phenotypic change. Since phenotypic changes preserve the total number of individuals within the population, zero Neumann is the most natural choice of boundary conditions for the non-local PDE~\eqref{PDE}. 

The way in which the fitness function $R(x,\rho(t))$ is defined depends largely on the underlying application problem, and we consider here the prototypical definition
\beq
\label{defR}
R(x,\rho(t)) := r(x)-\rho(t).
\eeq
In definition~\eqref{defR}, the function $r(x)$ is the net per capita growth rate of the individuals in the phenotypic state $x$ (\emph{i.e.} the difference between the rate of proliferation through asexual reproduction and the rate of death under natural selection). Hence the maximum points of this function correspond to the fitness peaks (\emph{i.e.} the peaks of the phenotypic fitness landscape of the population). Moreover, the saturating term $-\rho(t)$ models the limitations on population growth imposed by carrying capacity constraints (\emph{e.g.} limited availability of space and resources). 

In the framework of the IDE~\eqref{IDE}, or the non-local PDE~\eqref{PDE}, a mathematical depiction of phenotypic adaptation can be obtained by studying the long-time behaviour of the population density function $n(t,x)$. In this regard, whilst the case of one single fitness peak has been broadly studied~\cite{ackleh2005rate,barles2009concentration,calsina2013asymptotics,chisholm2016effects,diekmann2005dynamics,lorz2011dirac,perthame2008dirac}, there is a paucity of literature concerning the case where multiple fitness peaks are present, with the exception of the asymptotic results presented in~\cite{alfaro2018evolutionary,busse2016mass,Coville2013,desvillettes2008selection,djidjou2017steady,Jabin2011,lorenzi2019structured,lorenzi2014asymptotic,Perthame2006,Pouchol2017b}. Based on these few previous results, we expect the solution to the IDE~\eqref{IDE} complemented with~\eqref{defR} to converge to a limit measure given by a sum of weighted Dirac masses centred at the maximum points of the function $r(x)$ as $t \to \infty$, and we envisage the long-time solution of the non-local PDE~\eqref{PDE} to exhibit a qualitatively similar behaviour in the asymptotic regime where $\beta \to 0$. This represents a mathematical formalisation of the biological notion that the phenotypic variants corresponding to the fitness peaks (\emph{i.e.} the fittest phenotypic variants) are ultimately selected. However, the following key questions still remain open: is there a unique weighted sum of Dirac masses defining the limit of the solution to the IDE~\eqref{IDE} when $t \to \infty$? If so, what are the only admissible values of the weights in front of the different Dirac masses? Are there any differences between the long-time solution to the IDE~\eqref{IDE} and the asymptotic limit when $\beta \to 0$ of the long-time solution to the non-local PDE~\eqref{PDE}? If so, how does the presence of the diffusion term change the values of the weights associated with the Dirac masses? How does the speed of convergence of the population size $\rho(t)$ to its long-time limit differs between the IDE~\eqref{IDE} and the non-local PDE~\eqref{PDE}?

In this paper, we address these questions focussing on the case where the fitness function is defined via~\eqref{defR}. In summary, using Laplace's method we derive an explicit formula to compute the weights in front of the Dirac masses that constitute the long-time limit of the solution to the IDE~\eqref{IDE}, and show that the weights are uniquely determined by the initial condition $n(0,x)$ and the Hessian $H(r)$ at the maximum points of the function $r(x)$ (\emph{vid.} Theorem~\ref{Weight_IDE}). Moreover, exploiting the properties of the principal eigenpair of the elliptic differential operator $\beta \Delta + r$, we prove that when $\beta \to 0$ the long-time limit of the non-local PDE~\eqref{PDE} converges to a limit measure consisting of a weighted sum of Dirac masses with non-negative weights centred at the maximum points of the function $r(x)$ (\emph{vid.} Proposition~\ref{Agreement}). We also derive sufficient conditions for the limit measure to be unique and demonstrate that, {\it ceteris paribus}, there can be a significant difference between this limit measure and the limit measure of the IDE~\eqref{IDE}. In particular, we show that, unless {\it ad hoc} symmetry assumptions are made (\emph{vid.} Proposition~\ref{propsym}), the limit measure for the non-local PDE~\eqref{PDE} may consist of less Dirac masses than the limit measure for the IDE~\eqref{IDE} (\emph{i.e.} a smaller number of weights will be strictly positive), and we provide a sufficient criterion to identify the maximum points of the function $r(x)$ corresponding to the Dirac masses with positive weights (\emph{vid.} Proposition~\ref{propnosym}). This criterion relies on a suitable multidimensional characterisation of the concavity of the function $r(x)$ at the maximum points which is borrowed from semiclassical analysis. Finally, we carry out a detailed characterisation of the speed of convergence of the population size $\rho(t)$ to its long-time limit both for the IDE~\eqref{IDE} and for the non-local PDE~\eqref{PDE}. Taken together, our results support a more in-depth theoretical understanding of the conditions leading to the emergence of stable polymorphism in asexual populations. 

The remainder of the paper is organised as follows. In Section~\ref{Problem} we introduce our main assumptions and a few technical preliminaries. In Section~\ref{secIDE} we carry out a qualitative and quantitative characterisation of the solution to the IDE~\eqref{IDE} complemented with~\eqref{defR} when $t \to \infty$, while in Section~\ref{secPDE} we study the asymptotic properties of the solution to the non-local PDE~\eqref{PDE} complemented with~\eqref{defR} by letting first $t \to \infty$ and then $\beta \to 0$. In both sections, we present a sample of numerical solutions that confirm the analytical results obtained. Section~\ref{secresper} concludes the paper by providing a brief overview of possible research perspectives.

\section{Main assumptions, notation and preliminaries}
\label{Problem}
In this paper,  we will consider the following Cauchy problem for the IDE~\eqref{IDE}
\begin{equation}
\begin{cases}
\label{Basis}
\displaystyle{\frac{\partial n}{\partial t}  =  \left(r(x)-\rho(t)\right) n, \quad n \equiv n(t,x), \quad (t,x) \in (0,\infty) \times \overline{\Omega}},
\\\\
n(0,x) = n^0(x), \quad n^0 \in C(\overline{\Omega}), \quad n^0 \geq 0, \quad n^0 \not\equiv 0
\end{cases}
\end{equation}
and the following initial-boundary value problem for the non-local parabolic PDE~\eqref{PDE} 
\begin{equation}
\begin{cases}
\label{BasisMut}
\displaystyle{\frac{\partial n_{\beta}}{\partial t} - \beta \Delta n_{\beta} =  \left(r(x)-\rho_{\beta}(t)\right) n_{\beta}, \quad n_{\beta} \equiv n_{\beta}(t,x), \quad (t,x) \in (0,\infty) \times \Omega},
\\\\
\nabla n_{\beta}(t,x) \cdot \nu(x) = 0, \quad (t,x) \in (0,\infty) \times \partial \Omega,
\\\\
n_{\beta}(0,x) = n^0(x), \quad  n^0 \in C(\Omega), \quad n^0 \geq 0, \quad n^0 \not\equiv 0, 
\end{cases}
\end{equation}
where $\nu(x)$ is the outward normal to the boundary $\partial \Omega$ at the point $x \in \partial \Omega$. 
\\
\paragraph{Main assumptions on the function $r(x)$.} 
In order to prevent $n(t,\cdot)$ from vanishing as $t \to \infty$, we will assume 
\begin{equation}
\label{rM}
r : \overline{\Omega} \to \mathbb{R}, \quad r \in C(\overline{\Omega}), \quad  \max_{x \in \Omega}  \, r(x) = r_M > 0
\end{equation}
and, being interested in the case where $r(x)$ has multiple maximum points, we will also assume
\begin{equation}
\label{Regularity2}
\arg \max(r) = \{\bar{x}_1, \ldots, \bar{x}_N\} \subset \Omega \quad \text{with} \quad N \geq 2.
\end{equation}
Notice that we assume all points $\bar{x}_i$ to belong to the interior of $\Omega$ in order to simplify the presentation. However, part of our results can be extended to the case where the set $\arg \max(r)$ contains some boundary points and, when appropriate, we will comment on how the proofs presented here could be adapted to such a case. Where necessary, we will make the additional assumptions 
\beq
\label{notemptyinters}
\supp(n^0) \, \cap \, \arg \max(r) \neq \emptyset
\eeq 
and
\begin{equation}
\label{Non_Degenerate}
r \in C^{2}(\overline{\Omega}), \quad  \det(H_i) < 0  \quad \text{for} \quad i =1, \ldots, N,
\end{equation} 
where $H_i$ is the Hessian $H(r)$ evaluated at the point $\bar{x}_i$. Assumptions~\eqref{Non_Degenerate} ensure that each maximum point $\bar{x}_i$ is nondegenerate. 
\\

\paragraph{Laplace's method.} We recall some useful results on the asymptotic expansion of integrals involving exponentials, usually referred to as Laplace's method. We refer the reader to~\cite{Wong2001} for a general presentation of such an asymptotic method and for the related proofs. Let $r$ satisfy assumptions~\eqref{rM}, \eqref{Regularity2} and \eqref{Non_Degenerate}, and assume $f \in C(\Omega)$. For $\bar{x}_i \in \arg \max(r)$ and $\e$ small enough so that $\bar{x}_i$ is the only maximum point of $r(x)$ in the ball $B(\bar{x}_i, \e)$, Laplace's method ensures that if $\bar{x}_i \in \supp\left(f \right)$ then
\beq
\label{Laplace}
\int_{B(\bar{x}_i, \e)} f(x) e^{r(x) t} \, \ddx \sim (2 \pi)^{d/2} \frac{f(\bar{x}_i)}{\sqrt{|\det(H_i)|}} \frac{e^{r_M t}}{t^{\frac{d}{2}}}\quad \text{as } t \to \infty, 
\eeq
whereas if $\bar{x}_i \notin \supp\left(f \right)$ then
\beq
\label{Laplace_zero}
\int_{B(\bar{x}_i, \e)} f(x) e^{r(x) t} \, \ddx  = o \left(\frac{e^{r_M t}}{t^{\frac{d}{2}}}\right) \quad \text{as } t \to \infty.
\eeq
Moreover, if $f(x)$ and $r(x)$ are, respectively, of class $C^1$ and $C^3$ on the ball $B(\bar{x}_i, \e)$, higher order terms of the asymptotic expansion can be computed. In this case, Laplace's method ensures that
\beq
\label{Laplace_further}
\int_{B(\bar{x}_i, \e)}  f(x) e^{r(x) t} \, \ddx  = \left(A_i + \frac{B_i}{t} + o \left( \frac{1}{t}  \right)  \right) \frac{e^{r_M t}}{t^{\frac{d}{2}}}\quad \text{as } t \to \infty,
\eeq
where $A_i$ and $B_i$ are real constants, the values of which are not relevant for our purposes.
\smallskip

\paragraph{Preliminaries about the operator $\beta \Delta + r$.}
We will consider the elliptic differential operator 
\beq
\label{Abeta}
L_{\beta} = \beta \Delta + r
\eeq
acting on functions defined on $\Omega$, and we will denote by $(\lambda_{\beta},\psi_{\beta})$ the principal eigenpair of $L_{\beta}$ (\emph{i.e.} $L_{\beta} \psi_{\beta} = - \lambda_{\beta} \psi_{\beta}$) with zero Neumann boundary condition. A useful result is established by the following lemma, which follows from the Krein-Rutman theorem~\cite{Evans2010,Leman2015}.
\\
\begin{lemma}
\label{eigenpair}
The principal eigenvalue $\lambda_\beta$ is simple and there is a unique normalised positive eigenfunction $\psi_\beta$ associated with $\lambda_\beta$. The eigenfunction $\psi_\beta$ is smooth and the principal eigenvalue $\lambda_\beta$ is given by 
\beq
\label{lambdabeta}
\lambda_\beta = \inf_{\phi \in{H^1(\Omega) \setminus \{0\}}} \mathcal{R}(L_{\beta},\phi),
\eeq
where $\mathcal{R}$ denotes the Rayleigh quotient, i.e.
$$
\mathcal{R}(L_{\beta},\phi) := \frac{\beta \displaystyle{\int_\Omega \left|\nabla \phi(x) \right|^2 \, \ddx - \int_\Omega r(x) \phi^2(x) \, \ddx}}{\displaystyle{\int_\Omega \phi^2(x) \, \ddx}}.
$$
The infimum is attained only when $\phi$ is a multiple of $\psi_\beta$.
\end{lemma}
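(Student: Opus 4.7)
The plan is to reduce the lemma to a standard application of the Krein--Rutman theorem by shifting $-L_\beta$ to make it invertible with a positive inverse, and then to recover the variational characterisation from the spectral theorem for compact self-adjoint operators. Pick $C>0$ with $C - r \geq 1$ on $\overline{\Omega}$ and set $\mathcal{L}_C := -\beta\Delta + (C-r)$ with zero Neumann boundary condition; the associated bilinear form
\[
a(u,v) := \beta\int_\Omega \nabla u\cdot\nabla v\,\ddx + \int_\Omega (C-r)\,uv\,\ddx
\]
on $H^1(\Omega)$ is continuous, symmetric and coercive, so by Lax--Milgram the weak inverse $T := \mathcal{L}_C^{-1}: L^2(\Omega)\to L^2(\Omega)$ is well defined. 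It is compact by Rellich--Kondrachov, self-adjoint, and strongly positive: if $0\leq f\not\equiv 0$ then $u = Tf$ satisfies $-\beta\Delta u + (C-r)u = f \geq 0$ with zero Neumann data, and the strong maximum principle combined with the Hopf lemma force $u>0$ on $\overline{\Omega}$.

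The strong form of the Krein--Rutman theorem then applies to $T$: its spectral radius $\mu>0$ is a simple eigenvalue with a one-dimensional eigenspace spanned by a positive eigenfunction, and no other eigenvalue of $T$ has modulus $\mu$. Translating back, the smallest eigenvalue of $\mathcal{L}_C$ is $\mu_1 = 1/\mu$, simple and admitting a unique normalised positive eigenfunction $\psi_\beta$, so $\lambda_\beta := \mu_1 - C$ and $\psi_\beta$ form the principal eigenpair of $L_\beta$ with all the stated simplicity and positivity properties. Smoothness of $\psi_\beta$ follows by elliptic regularity bootstrapping: from $\psi_\beta\in H^1(\Omega)$ the identity $-\beta\Delta\psi_\beta = (r+\lambda_\beta)\psi_\beta$ with Neumann data yields $\psi_\beta\in H^2(\Omega)$, and Schauder estimates up to the boundary (valid as soon as $r$ is H\"older continuous, in particular under \eqref{Non_Degenerate}) push this to $C^{2,\alpha}(\overline{\Omega})$.

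For the Rayleigh-quotient formula \eqref{lambdabeta}, the spectral theorem for $T$ produces an $L^2$-orthonormal basis of eigenfunctions of $\mathcal{L}_C$ with eigenvalues $\mu_1\leq \mu_2\leq\cdots\to\infty$. Expanding any $\phi\in H^1(\Omega)\setminus\{0\}$ in this basis yields $a(\phi,\phi)\geq \mu_1\int_\Omega \phi^2\,\ddx$, with equality if and only if $\phi$ is a multiple of $\psi_\beta$; subtracting $C\int_\Omega \phi^2\,\ddx$ from both sides and dividing by $\int_\Omega \phi^2\,\ddx$ recovers $\mathcal{R}(L_\beta,\phi)\geq \lambda_\beta$, with equality precisely on the line spanned by $\psi_\beta$. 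The main technical point to check is that minimising over the unconstrained space $H^1(\Omega)$ genuinely captures the Neumann eigenvalue rather than a Dirichlet-type object; this is standard because the Neumann condition is the natural boundary condition for $a$ and is automatically recovered as the Euler--Lagrange equation of any minimiser.
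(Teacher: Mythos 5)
Your proof is correct and takes essentially the same route as the paper, which for this lemma simply cites the Krein--Rutman theorem without writing out the argument; you have supplied the standard details (shift to a coercive form, compact self-adjoint positive inverse, spectral decomposition for the Rayleigh quotient, natural boundary condition). The one point worth tightening is the invocation of the strong Krein--Rutman theorem directly on $T:L^2(\Omega)\to L^2(\Omega)$, since the positive cone of $L^2(\Omega)$ has empty interior: either transfer $T$ to $C(\overline{\Omega})$ via elliptic regularity before applying Krein--Rutman, or bypass it altogether by noting that for a compact self-adjoint positivity-improving operator the top eigenvalue is simple because a Rayleigh-quotient maximiser may be replaced by its absolute value, strong positivity then makes every maximiser strictly positive, and two strictly positive eigenfunctions cannot be $L^2$-orthogonal.
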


We will also make use of the second eigenvalue $\lambda_{2,\beta} < \lambda_\beta$ of the operator $L_{\beta}$. For a function $u \in L^2(\Omega)$, we will denote by $\alpha_2(u)$ the $L^2(\Omega)$-projection of $u$ onto the finite-dimensional eigenspace associated to $\lambda_{2, \beta}$, and we will denote the opposite to the spectral gap of the operator $L_{\beta}$ by 
\beq
\label{def:gammabeta}
\gamma_\beta := \lambda_{2,\beta} - \lambda_\beta < 0.
\eeq

If the set $\Omega$ is symmetric with respect to some hyperplane $S$, which without loss of generality we will define as 
\beq
\label{defS}
S:=\{x_1=0\},
\eeq
then
\beq
\label{Omegasym}
x=(x_1, x_2, \ldots, x_d) \in \Omega \quad \implies \quad (-x_1, x_2, \ldots, x_d) \in \Omega. 
\eeq
Under assumption~\eqref{Omegasym} and given a function $f : \overline{\Omega} \to \mathbb{R}$, we will use the notation
$$
\hat{f}(x) = f(-x_1, x_2, \ldots, x_d) \quad \text{for } \; x \in \Omega.
$$
With this notation, we will say $f$ to be symmetric with respect to the hyperplane $S$ if $\hat{f}(x)=f(x)$ for all $x \in \Omega$. A useful result is established by the following lemma.
\\

\begin{lemma}
\label{Eigen}
If the set $\Omega$ satisfies assumption~\eqref{Omegasym} and the function $r(x)$ is symmetric with respect to the hyperplane $S$, i.e. if
\beq
\label{rsym}
\hat{r}(x) = r(x) \quad \text{for all } \; x \in \Omega, 
\eeq
then the principal eigenfunction $\psi_{\beta}$ is symmetric with respect to the hyperplane $S$.
\end{lemma}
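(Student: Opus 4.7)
The plan is to exploit the uniqueness of the normalised positive principal eigenfunction established in Lemma~\ref{eigenpair}: I will show that $\hat{\psi}_\beta$ is also a normalised positive eigenfunction of $L_\beta$ associated with the eigenvalue $\lambda_\beta$, and conclude by uniqueness that $\hat{\psi}_\beta = \psi_\beta$, which is precisely the claimed symmetry.

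Let $T : \overline{\Omega} \to \overline{\Omega}$ denote the reflection $T(x_1, x_2, \ldots, x_d) = (-x_1, x_2, \ldots, x_d)$, so that $\hat{f} = f \circ T$ for any $f$. Assumption~\eqref{Omegasym} says exactly that $T(\overline{\Omega}) = \overline{\Omega}$, and since $T$ is a linear isometry with $T^2 = \mathrm{id}$, it restricts to a bijection of $\partial \Omega$ onto itself with $\nu(Tx) = T\nu(x)$ for $x \in \partial \Omega$. First I would compute, using the chain rule and the fact that $T$ is its own inverse with $DT = T$, that $\Delta(\psi_\beta \circ T) = (\Delta \psi_\beta) \circ T$. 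Combining this with the hypothesis~\eqref{rsym}, namely $r = r \circ T$, gives
\[
L_\beta \hat{\psi}_\beta = \beta \Delta(\psi_\beta \circ T) + r \, (\psi_\beta \circ T) = \bigl(\beta \Delta \psi_\beta + r \psi_\beta\bigr) \circ T = -\lambda_\beta \, \hat{\psi}_\beta,
\]
so $\hat{\psi}_\beta$ is an eigenfunction of $L_\beta$ associated with $\lambda_\beta$.

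Next I would verify the Neumann condition for $\hat{\psi}_\beta$: for $x \in \partial \Omega$, a direct computation gives $\nabla \hat{\psi}_\beta(x) = T \bigl(\nabla \psi_\beta(Tx)\bigr)$, and since $T$ is an isometry with $\nu(Tx) = T \nu(x)$,
\[
\nabla \hat{\psi}_\beta(x) \cdot \nu(x) = T\bigl(\nabla \psi_\beta(Tx)\bigr) \cdot \nu(x) = \nabla \psi_\beta(Tx) \cdot T\nu(x) = \nabla \psi_\beta(Tx) \cdot \nu(Tx) = 0.
\]
Moreover $\hat{\psi}_\beta > 0$ on $\Omega$ since $\psi_\beta > 0$, and a change of variable $y = Tx$, whose Jacobian has absolute value $1$, shows $\|\hat{\psi}_\beta\|_{L^2(\Omega)} = \|\psi_\beta\|_{L^2(\Omega)} = 1$. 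Thus $\hat{\psi}_\beta$ is a normalised positive eigenfunction of $L_\beta$ for $\lambda_\beta$ satisfying the zero Neumann condition, and the uniqueness statement of Lemma~\ref{eigenpair} forces $\hat{\psi}_\beta = \psi_\beta$, i.e. $\psi_\beta$ is symmetric with respect to $S$.

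The proof is essentially a uniqueness-via-symmetrisation argument, and the only point that requires some care is the bookkeeping for the Neumann condition under the reflection; everything else reduces to the fact that both the Laplacian and the $L^2$ norm are invariant under the isometry $T$.
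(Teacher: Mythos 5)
Your proof is correct and follows essentially the same route as the paper: reflect the eigenfunction, observe that the eigenvalue equation is preserved under the symmetry assumptions, and conclude by uniqueness of the normalised positive principal eigenfunction that $\hat{\psi}_\beta = \psi_\beta$. You supply more detail than the paper does (the invariance of the Laplacian under the reflection, the Neumann condition, and the normalisation), but the underlying argument is identical.
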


\begin{proof}
Since $L_{\beta} \psi_\beta = - \lambda_{\beta} \, \psi_{\beta}$, that is,
$$
\beta \Delta \psi_{\beta} + r \, \psi_\beta = -\lambda_{\beta} \, \psi_{\beta} \quad \text{in } \; \Omega,
$$
if assumption~\eqref{Omegasym} is satisfied then
$$
\beta \Delta \hat{\psi}_{\beta} + \hat{r} \, \hat{\psi}_\beta = -\lambda_{\beta} \, \hat{\psi}_{\beta} \quad \text{in } \; \Omega.
$$
Under the additional assumption~\eqref{rsym} the latter elliptic equation implies that
$$
\beta \Delta \hat{\psi}_{\beta} + r \, \hat{\psi}_\beta = -\lambda_{\beta} \, \hat{\psi}_{\beta} \quad \text{in } \; \Omega,
$$
that is, $L_{\beta} \hat{\psi}_{\beta} = - \lambda_{\beta} \, \hat{\psi}_{\beta}$. Hence $\hat{\psi}_{\beta}$ is a normalised positive eigenfunction associated with the principal eigenvalue $\lambda_{\beta}$, and the uniqueness of the principal eigenfunction of the operator $L_{\beta}$ ensures that $\hat{\psi}_{\beta} = \psi_{\beta}$.
\end{proof}

\paragraph{Function space framework}
Throughout the paper, we will consider the space of Radon measures $\mathcal{M}^1\big(\overline{\Omega}\big)$ as the dual of the space of continuous functions $C\big(\overline{\Omega}\big)$. With a slight abuse of notation, the integral of a function $\varphi \in C\big(\overline{\Omega}\big)$ against a measure $\mu \in \mathcal{M}^1\big(\overline{\Omega}\big)$ will be denoted in the same way as the integral of the product of the function $\varphi$ with an $L^1$-function, \emph{i.e.}
$$
\int_{\Omega} \varphi(x) \, \mu(x) \, {\rm d}x \; = \; \int_{\Omega} \varphi \; {\rm d}\mu.  
$$
Sequences of functions $f_k$ in $L^1(\Omega)$ will be regarded as elements of the bigger space $\mathcal{M}^1\big(\overline{\Omega}\big)$. Given a sequence $\mu_k$ in $\mathcal{M}^1\big(\overline{\Omega}\big)$, we will write $\displaystyle{\mu_k  \xrightharpoonup[k  \rightarrow \infty]{} \mu}$ to indicate the weak-$*$ convergence of $\mu_k$ to $\mu$, namely that 
$$
\int_{\Omega} \varphi(x) \, \mu_k(x) \, {\rm d}x \; \xrightarrow[k\rightarrow\infty] \; \int_{\Omega} \varphi(x) \, \mu(x) \, {\rm d}x \quad \forall \, \varphi \in C\big(\overline{\Omega}\big).
$$
Moreover, we will say that the sequence $\mu_k$ concentrates on a set $\omega \subset \Omega$ if 
$$
\int_{\Omega}\varphi(x) \, \mu_k(x) \, {\rm d}x \xrightarrow[k\rightarrow\infty] \, 0 \quad \forall \,  \varphi \in C\big(\overline{\Omega}\big) \; \text{ s.t. } \supp(\varphi) \cap \omega = \emptyset,
$$
and we will use the result given by the following lemma, which is a well-known fact in measure theory.
\\
\begin{lemma}
\label{lemmaconc}
If a sequence $\mu_k$ in $\mathcal{M}^1\big(\overline{\Omega}\big)$ concentrates on a finite set $\{\bar{x}_1, \ldots , \bar{x}_N\}$ and $\displaystyle{\mu_k  \xrightharpoonup[k  \rightarrow \infty]{} \mu}$ then the limit measure $\mu$ must be a linear combination of Dirac masses centred at the points $\bar{x}_1, \ldots, \bar{x}_N$.
\end{lemma}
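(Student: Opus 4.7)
The plan is to show first that the limit measure $\mu$ is supported on the finite set $F := \{\bar{x}_1, \ldots, \bar{x}_N\}$, and then to invoke the fact that a Radon measure on $\overline{\Omega}$ with finite support is exactly a linear combination of Dirac masses centred at the points of the support.

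For the first step, I fix an arbitrary point $x_0 \in \overline{\Omega} \setminus F$. Since $F$ is finite and $x_0 \notin F$, the distance from $x_0$ to $F$ is strictly positive, so I can choose $r > 0$ small enough that the closed ball $\overline{B(x_0,r)} \cap \overline{\Omega}$ does not meet $F$. A standard continuous cut-off then yields $\varphi \in C\big(\overline{\Omega}\big)$ with $0 \leq \varphi \leq 1$, $\varphi(x_0) = 1$ and $\supp(\varphi) \subset B(x_0,r)$, so that $\supp(\varphi) \cap F = \emptyset$. The concentration hypothesis gives $\int_{\Omega} \varphi(x) \, \mu_k(x) \, \ddx \to 0$, whereas the weak-$*$ convergence $\mu_k \xrightharpoonup[k \to \infty]{} \mu$ gives $\int_{\Omega} \varphi(x) \, \mu_k(x) \, \ddx \to \int_{\Omega} \varphi(x) \, \mu(x) \, \ddx$, so $\int_{\Omega} \varphi(x) \, \mu(x) \, \ddx = 0$. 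Varying $x_0$ and $r$ and appealing to the Riesz representation of Radon measures then forces the restriction of $\mu$ to the open set $\overline{\Omega} \setminus F$ to vanish, hence $\supp(\mu) \subset F$.

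For the second step, I set $c_i := \mu(\{\bar{x}_i\})$ for $i = 1, \ldots, N$ and consider the signed Radon measure $\nu := \mu - \sum_{i=1}^N c_i \, \delta_{\bar{x}_i}$. Since both $\mu$ and each $\delta_{\bar{x}_i}$ are supported on $F$, so is $\nu$; moreover $\nu(\{\bar{x}_i\}) = c_i - c_i = 0$ for every $i$. Because a Radon measure supported on the finite set $F$ is completely determined by its masses on the individual atoms, these two properties force $\nu \equiv 0$, and therefore $\mu = \sum_{i=1}^N c_i \, \delta_{\bar{x}_i}$.

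The only mild technical subtlety lies in the construction of the cut-off $\varphi$ when $x_0 \in \partial\Omega$, but this is routine since $\overline{\Omega}$ is a compact metric space to which Urysohn's lemma (or an explicit distance-based cut-off) applies. Beyond that, the argument is standard measure-theoretic bookkeeping and I do not anticipate any substantial obstacle.
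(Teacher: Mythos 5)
Your proof is correct. Note that the paper itself offers no proof of this lemma: it is stated as ``a well-known fact in measure theory,'' so there is nothing to compare against beyond checking your argument stands on its own, which it does. Two small remarks. First, the bump-function construction around an arbitrary $x_0\notin F$ is not strictly needed to obtain $\int_\Omega \varphi\,\mu\,\ddx=0$: the concentration hypothesis already applies to \emph{every} $\varphi\in C\big(\overline{\Omega}\big)$ with $\supp(\varphi)\cap F=\emptyset$, and combining this with weak-$*$ convergence gives the vanishing directly; the cut-offs (or Riesz representation applied to the open set $\overline{\Omega}\setminus F$) are only needed for the final step of converting ``the functional vanishes on $C_c\big(\overline{\Omega}\setminus F\big)$'' into ``$|\mu|\big(\overline{\Omega}\setminus F\big)=0$,'' which is exactly how you use them. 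Second, in your last step the phrase ``a Radon measure supported on the finite set $F$ is completely determined by its masses on the individual atoms'' is essentially the claim being established; it is cleaner to note that for any Borel set $A$ one has $\mu(A)=\mu(A\cap F)=\sum_{i:\,\bar{x}_i\in A}\mu(\{\bar{x}_i\})$ by finite additivity, which is precisely the identity $\mu=\sum_{i=1}^N c_i\,\delta_{\bar{x}_i}$. Neither point is a gap; the argument is sound as written.
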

Finally, we will say that a measure $\mu \in \mathcal{M}^1\big(\overline{\Omega}\big)$ is symmetric with respect to the hyperplane $S$ if 
$$
\int_{\Omega} \varphi(x) \, \mu(x) \, {\rm d}x \; = \; \int_{\Omega} \hat{\varphi}(x) \, \mu(x) \, {\rm d}x \quad \forall \, \varphi \in C\big(\overline{\Omega}\big) \quad \text{s.t.} \quad \hat{\varphi}(x) = \varphi(x) \;\; \forall \, x \in \Omega
$$
and we will use the result given by the following lemma, the proof of which is straightforward.
\\
\begin{lemma}
\label{Symmetry}
Let $\mu_k$ be a sequence of symmetric measures in $\mathcal{M}^1\big(\overline{\Omega}\big)$. If $\displaystyle{\mu_k  \xrightharpoonup[k  \rightarrow \infty]{} \mu}$ then the limit measure $\mu$ is symmetric as well.
\end{lemma}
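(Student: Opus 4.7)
The proof is a direct application of the definition of weak-$*$ convergence, using that the reflection map $x \mapsto (-x_1, x_2, \ldots, x_d)$ is a homeomorphism of $\overline{\Omega}$ onto itself (by assumption~\eqref{Omegasym}) and therefore sends $C\big(\overline{\Omega}\big)$ into $C\big(\overline{\Omega}\big)$. The plan is to test the limit measure $\mu$ against a continuous symmetric $\varphi$, rewrite this using the symmetry of each $\mu_k$, and then pass to the limit.

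More precisely, first I would fix an arbitrary $\varphi \in C\big(\overline{\Omega}\big)$ such that $\hat{\varphi}(x) = \varphi(x)$ for all $x \in \Omega$. Since each $\mu_k$ is symmetric, one has
\beq
\int_{\Omega} \varphi(x) \, \mu_k(x) \, {\rm d}x = \int_{\Omega} \hat{\varphi}(x) \, \mu_k(x) \, {\rm d}x
\eeq
for every $k$. By the weak-$*$ convergence $\mu_k \xrightharpoonup[k \rightarrow \infty]{} \mu$ applied once with test function $\varphi$ and once with test function $\hat{\varphi}$ (which is admissible since $\hat{\varphi} \in C\big(\overline{\Omega}\big)$), both sides converge and yield
\beq
\int_{\Omega} \varphi(x) \, \mu(x) \, {\rm d}x = \int_{\Omega} \hat{\varphi}(x) \, \mu(x) \, {\rm d}x.
\eeq
Since $\varphi$ was an arbitrary symmetric continuous function, this is exactly the definition of $\mu$ being symmetric with respect to the hyperplane $S$.

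There is no real obstacle to this argument: the only thing to check is that the reflection preserves continuity and maps $\overline{\Omega}$ into itself, both of which follow immediately from assumption~\eqref{Omegasym}. No additional regularity or tightness of the sequence $\mu_k$ is needed, because the testing is done against a single fixed continuous function.
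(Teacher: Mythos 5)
The paper offers no proof of this lemma (it is dismissed as ``straightforward''), so there is nothing to compare against line by line; your argument -- use symmetry of each $\mu_k$ to equate the integrals of $\varphi$ and $\hat{\varphi}$, then pass to the limit by weak-$*$ convergence applied to the two test functions $\varphi$ and $\hat{\varphi}$, the latter being admissible because reflection is a homeomorphism of $\overline{\Omega}$ under assumption~\eqref{Omegasym} -- is exactly the intended one.

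There is, however, one wrinkle you should fix. You restrict throughout to test functions satisfying $\hat{\varphi}=\varphi$, mirroring the paper's literal definition of a symmetric measure. But for such $\varphi$ the identity $\int_\Omega \varphi\,\mu_k\,\ddx=\int_\Omega \hat{\varphi}\,\mu_k\,\ddx$ holds for \emph{every} measure, since $\varphi$ and $\hat{\varphi}$ are the same function; your chain of equalities is then a tautology and the hypothesis that the $\mu_k$ are symmetric is never actually used. The definition as printed has the same defect, and with that reading the lemma would be useless where it is invoked: in the proof of Proposition~\ref{propsym} one must test the limit $a\,\delta_{\bar{x}_1}+(1-a)\,\delta_{\bar{x}_2}$ against a \emph{non}-symmetric $\varphi$ (say, equal to $1$ near $\bar{x}_1$ and $0$ near $\bar{x}_2$) to obtain $a=1-a$ and hence $a=\tfrac{1}{2}$. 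The intended meaning of symmetry is $\int_\Omega \varphi\,\mu\,\ddx=\int_\Omega \hat{\varphi}\,\mu\,\ddx$ for \emph{all} $\varphi\in C\big(\overline{\Omega}\big)$, and your proof establishes precisely this stronger statement verbatim once you delete the restriction on $\varphi$: symmetry of $\mu_k$ gives the identity for arbitrary $\varphi$ and each $k$, and the two applications of weak-$*$ convergence carry it to the limit. With that one-line change the proof is complete and correct.
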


\section{Long-time behaviour of the Cauchy problem \eqref{Basis}}
\label{secIDE}
In this section, we study the asymptotic behaviour of the solutions to the Cauchy problem~\eqref{Basis} when $t \to \infty$ (Section~\ref{IDEanalysis}), and we provide a sample of numerical solutions that confirm the analytical results obtained (Section~\ref{IDEsimul}). 

\subsection{Asymptotic analysis}
\label{IDEanalysis}
Under assumptions~\eqref{rM} there exists a unique non-negative solution $n \in C([0,+\infty); L^1(\Omega))$ of the Cauchy problem~\eqref{Basis}~\cite{desvillettes2008selection,Perthame2006}. Moreover, solving the Cauchy problem~\eqref{Basis} yields the semi-explicit formula
\beq
\label{Implicit} 
n(t,x) = n^0(x)  \, \displaystyle{e^{r(x) t - \int_{0}^t \rho(s) \, \dd s }}
\eeq
and, if assumption~\eqref{Regularity2} is satisfied as well, the solution $n(t,x)$ is known to concentrate on the set $\arg \max(r)$ when $t \to \infty$, as established by the following theorem.
\\
\begin{theorem}
\label{Theorem1}
Under assumptions~\eqref{rM}-\eqref{notemptyinters}, the solution to the Cauchy problem~\eqref{Basis} is such that
\beq
\label{e.th1rho}
\rho(t) \xrightarrow[t \to \infty]{} r_M
\eeq 
and, up to extraction of subsequences,
\beq
\label{e.th1n}
n(t,x)  \xrightharpoonup[t  \rightarrow \infty]{} r_M \, \sum_{i=1}^N a_i \, \delta_{\bar{x}_i}(x) \;\; \text{ with } \;\; a_i \geq 0 \;\; \text{ and } \;\; \sum_{i=1}^N a_i =1.
\eeq 
\end{theorem}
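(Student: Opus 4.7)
The plan is to exploit the explicit representation \eqref{Implicit}, rewritten as $n(t,x) = n^0(x) e^{r(x)t - Q(t)}$ with $Q(t) := \int_0^t \rho(s)\,\dd s$. Setting $I(t) := \int_\Omega n^0(x) e^{r(x)t}\,\ddx$ and $F(t) := \int_0^t I(s)\,\dd s$, the identity $\frac{d}{dt} e^{Q(t)} = e^{Q(t)} \rho(t) = I(t)$ integrates (using $Q(0)=0$) to
$$
e^{Q(t)} = 1 + F(t), \qquad \rho(t) = \frac{I(t)}{1 + F(t)},
$$
and every subsequent estimate will be read off from this identity.

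The first step is concentration on $\arg\max(r)$. Using assumption \eqref{notemptyinters}, pick $\bar{x}_i \in \supp(n^0) \cap \arg\max(r)$; by continuity of $r$ and the definition of the support, for every $\delta > 0$ there is a ball $B(\bar{x}_i, \varepsilon)$ on which $r \geq r_M - \delta$ and $\int_{B(\bar{x}_i, \varepsilon)} n^0 \,\ddx > 0$. This yields $I(s) \geq c_\delta e^{(r_M - \delta)s}$ for all $s \geq 0$, hence $F(t) \geq c'_\delta e^{(r_M - \delta)t}$ for $t$ large, whence $e^{Q(t)} \geq c\, e^{(r_M - \delta)t}$. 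For $\varphi \in C(\overline{\Omega})$ with $\supp(\varphi) \cap \arg\max(r) = \emptyset$, continuity of $r$ provides $\eta > 0$ such that $\supp(\varphi) \subset \{r \leq r_M - \eta\}$, and therefore
$$
\left| \int_\Omega \varphi(x) n(t,x)\, \ddx \right| = e^{-Q(t)} \left| \int_\Omega \varphi(x) n^0(x) e^{r(x)t}\, \ddx \right| \leq C\, e^{-(\eta - \delta)t} \xrightarrow[t \to \infty]{} 0
$$
upon choosing $\delta < \eta$. Combined with the a priori bound $\limsup_t \rho(t) \leq r_M$ coming from $\rho'(t) \leq \rho(t)(r_M - \rho(t))$, Banach--Alaoglu and Lemma \ref{lemmaconc} give that, up to extraction, $n(t_k, \cdot) \xrightharpoonup[k \to \infty]{} \sum_{i=1}^N c_i \, \delta_{\bar{x}_i}$ with $c_i \geq 0$.

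To close the argument I must upgrade this to full convergence $\rho(t) \to r_M$, which by testing the weak-$*$ limit with $\varphi \equiv 1$ will then force $\sum_i c_i = r_M$. I would prove $F(t)/I(t) \to 1/r_M$, from which $\rho(t) = I(t)/(1+F(t)) \to r_M$ follows immediately since $F(t) \to \infty$. Using
$$
F(t) = \int_\Omega n^0(x) \, \frac{e^{r(x)t} - 1}{r(x)}\, \ddx
$$
(with the natural convention on $\{r = 0\}$), split each of $I(t)$ and $F(t)$ at the level set $\{r \geq r_M - \delta\}$: the previously derived lower bound renders the contribution of $\{r < r_M - \delta\}$ exponentially smaller than the total, while on $\{r \geq r_M - \delta\}$ the value $1/r(x)$ lies in the interval $[1/r_M,\, 1/(r_M - \delta)]$. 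A direct comparison yields $F(t)/I(t) = 1/r_M + O(\delta)$ as $t \to \infty$, and letting $\delta \to 0$ gives the claim. Setting $a_i := c_i / r_M$ then produces \eqref{e.th1n}, and \eqref{e.th1rho} has already been shown.

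The main obstacle is this last step, namely promoting $\limsup_t \rho(t) \leq r_M$ and the Ces\`{a}ro-type statement $Q(t)/t \to r_M$ into genuine convergence $\rho(t) \to r_M$: oscillations of $\rho$ around $r_M$ are not ruled out by concentration alone. The trick is to exploit the exact identity $\rho = F'/(1+F)$ together with the quasi-monotone asymptotic $F(t) \sim I(t)/r_M$, whose validity rests on the fact that $1/r$ is essentially constant equal to $1/r_M$ on the level sets that capture the bulk of the integrand as $t \to \infty$.
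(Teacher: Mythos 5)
Your proof is correct, and it takes a genuinely different route from the one the paper actually writes out. The paper's own proof (in the Appendix) imposes the extra hypothesis $\min_\Omega r = r_m>0$, shows $\rho$ is bounded and of bounded variation by controlling the negative part of $\rho'$ (which is where the positive lower bound $\rho\geq\rho_m>0$ is used), deduces that $\rho(t)$ has a limit, and identifies that limit as $r_M$ by a two-sided contradiction argument; the concentration/Banach--Alaoglu/Lemma~\ref{lemmaconc} ending is the same as yours. Your argument instead runs entirely off the exact identity $e^{Q(t)}=1+F(t)$, i.e.\ the fully explicit solution $\rho=I/(1+F)$, which is the Perthame-style alternative the paper only mentions in passing and likewise attributes to the case $r_m>0$. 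The interesting point is that your level-set splitting $\{r\geq r_M-\delta\}$ versus $\{r<r_M-\delta\}$, combined with the lower bound $I(t)\gtrsim e^{(r_M-\delta)t}$ coming from assumption~\eqref{notemptyinters}, shows that $F(t)/I(t)\to 1/r_M$ without any sign condition on $r$ away from its maximum: the contribution of $\{r<r_M-\delta\}$ to both $I$ and $F$ is $O\big(e^{(r_M-\delta)t}\big)$ because $\int_0^t e^{r(x)s}\,\dd s\leq e^{(r_M-\delta)t}/(r_M-\delta)$ there, while $1/r$ is pinched between $1/r_M$ and $1/(r_M-\delta)$ on the complementary set. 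So you obtain convergence $\rho(t)\to r_M$ under exactly the hypotheses \eqref{rM}--\eqref{notemptyinters} of the statement, a case the paper delegates to the Lyapunov-functional machinery of the cited literature. What the paper's BV approach buys in exchange is a sign/monotonicity structure for $\rho'$ (exponential decay of $(\rho')_-$) that is of independent interest; what your approach buys is elementarity and the removal of the $r_m>0$ restriction. The only places where you should spell out details are (i) the convention for $\big(e^{r(x)t}-1\big)/r(x)$ on $\{r=0\}$ and the bound on $\{r\leq 0\}$, which your inequality $\int_0^t e^{r(x)s}\,\dd s\leq e^{(r_M-\delta)t}/(r_M-\delta)$ handles uniformly, and (ii) that $F(t)\to\infty$ so that $1/(1+F(t))\sim 1/F(t)$; both are immediate from what you already wrote, so there is no gap.
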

In the case where 
\beq
\label{rm}
\min_{x \in \Omega}  \, r(x) = r_m > 0,
\eeq
Theorem~\ref{Theorem1} can be proved through a few simple calculations building upon the method presented in~\cite{perthame2008dirac}, as shown by the proof provided in the Appendix for the sake of completeness. Alternatively, when assumption~\eqref{rm} is satisfied, one can prove Theorem~\ref{Theorem1} using the method of proof presented in~\cite{Perthame2006}, which relies on the observation that the semi-explicit solution~\eqref{Implicit} can be made explicit as
$$
\displaystyle{n(t,x) = \frac{\displaystyle{n^0(x) \, e^{r(x)t}}}{1 + \displaystyle{\int_\Omega\frac{ n^0(y)}{r(y)}(e^{r(y)t}-1)\, {\rm d}y}}}.
$$
On the other hand, in the case where assumption~\eqref{rm} is not satisfied, the proof is much more intricate and requires the use of a Lyapunov functional of the form
$$
W(t) = \int_\Omega \big[n(t,x) - n^\infty(x) - n^\infty(x) \ln \left(n(t,x)\right) \big] \, \ddx,
$$
with $n^\infty$ being any measure concentrated on the set $\arg \max(r)$ and having total mass $r_M$. We refer the interested reader to~\cite{Jabin2011, Pouchol2017b} for a proof of Theorem~\ref{Theorem1} in such a more general case. 

We prove here that the coefficients $a_1, \ldots, a_N$ that define the limit measure~\eqref{e.th1n} are uniquely determined by the initial condition and by the Hessian of $r(x)$ at the points $\bar{x}_1, \ldots, \bar{x}_N$, which entails the convergence of the whole trajectory $n(t,\cdot)$ to a unique limit point as $t \to \infty$. These results are summarised by the following theorem, which also provides a characterisation of the rate of convergence of the total mass $\rho(t)$ to the long-term limit $r_M$.
\\
\begin{theorem}
\label{Weight_IDE}
If assumptions \eqref{rM}-\eqref{Non_Degenerate} are satisfied, then the solution to the Cauchy problem~\eqref{Basis} is such that
\beq
\label{e.prop}
n(t,x)  \xrightharpoonup[t  \rightarrow \infty]{} r_M \, \sum_{i=1}^N a_i \, \delta_{\bar{x}_i}(x) 
\eeq 
with
\beq
\label{alphaiIDE}
a_i = A \frac{n^0(\bar{x}_i)}{\sqrt{|\det(H_i)|}} \quad \text{for } \; i=1, \ldots, N,
\eeq
where $A>0$ is a normalising constant such that $\displaystyle{\sum_{i=1}^N a_i = 1}$. 

\noindent
Moreover, if the functions $n^0(x)$ and $r(x)$ are, respectively, of class $C^1$ and $C^3$ in a neighbourhood of each maximum point $\bar{x}_1, \ldots, \bar{x}_N$ then  
\beq
\label{rhosim}
r_M - \rho(t) \sim \frac{d}{2t} \quad \text{as } t \to \infty. 
\eeq
\end{theorem}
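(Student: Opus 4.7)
The plan is to exploit the semi-explicit formula \eqref{Implicit}: setting $Q(t) := \int_0^t \rho(s)\,\dd s$, we have $n(t,x) = n^0(x)\, e^{r(x)t - Q(t)}$, so that for every test function $\varphi \in C(\overline{\Omega})$,
\[
\int_\Omega \varphi(x)\, n(t,x)\, \ddx \; = \; e^{-Q(t)} \int_\Omega \varphi(x)\, n^0(x)\, e^{r(x) t}\, \ddx.
\]
This reduces the asymptotics of $n(t,\cdot)$ to those of integrals of Laplace type. Since under assumption \eqref{Non_Degenerate} each maximiser $\bar{x}_i$ is non-degenerate and interior to $\Omega$, I would choose $\varepsilon > 0$ so small that the balls $B(\bar{x}_i,\varepsilon)$ are pairwise disjoint, split $\Omega$ into these balls plus a remainder region $\Omega_\varepsilon$ on which $r \leq r_M - \eta$ for some $\eta > 0$ (whose contribution is exponentially smaller than $e^{r_M t}/t^{d/2}$), and then apply the expansions \eqref{Laplace} and \eqref{Laplace_zero} on each ball.

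Concretely, writing $I_\varphi(t) := \int_\Omega \varphi\, n^0\, e^{r t}\, \ddx$ and $I(t) := I_1(t)$, Laplace's method gives
\[
I_\varphi(t) = (2\pi)^{d/2}\, \frac{e^{r_M t}}{t^{d/2}} \left[\, \sum_{i=1}^N \frac{\varphi(\bar{x}_i)\, n^0(\bar{x}_i)}{\sqrt{|\det(H_i)|}} + o(1)\, \right],
\]
where the terms with $\bar{x}_i \notin \supp(n^0)$ are absorbed into $o(1)$ via \eqref{Laplace_zero}. Taking the ratio $I_\varphi(t)/I(t)$ cancels the common factor $(2\pi)^{d/2} e^{r_M t}/t^{d/2}$, and since $\rho(t) = e^{-Q(t)} I(t) \to r_M$ by Theorem~\ref{Theorem1}, I would conclude
\[
\int_\Omega \varphi\, n(t, \cdot)\, \ddx \; = \; \rho(t)\, \frac{I_\varphi(t)}{I(t)} \; \xrightarrow[t\to\infty]{} \; r_M\, \sum_{i=1}^N a_i\, \varphi(\bar{x}_i),
\]
with $a_i$ as in \eqref{alphaiIDE} and $A = \bigl(\sum_j n^0(\bar{x}_j)/\sqrt{|\det(H_j)|}\bigr)^{-1}$. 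Because the limit is the same for every $\varphi$ and independent of any subsequence, the whole trajectory $n(t,\cdot)$ converges weakly-$*$ to the stated measure.

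For the rate \eqref{rhosim}, I would upgrade to the higher-order expansion \eqref{Laplace_further}, which under the $C^1/C^3$ assumptions yields $I(t) = \frac{e^{r_M t}}{t^{d/2}}\big(A_0 + B_0/t + o(1/t)\big)$ and, by logarithmic differentiation, $I'(t)/I(t) = r_M - d/(2t) + o(1/t)$. Differentiating $\ln \rho(t) = -Q(t) + \ln I(t)$ and using $Q'(t) = \rho(t)$ then gives the scalar identity
\[
\rho(t) + \frac{\rho'(t)}{\rho(t)} \; = \; \frac{I'(t)}{I(t)} \; = \; r_M - \frac{d}{2t} + o\!\left(\frac{1}{t}\right).
\]
Setting $v(t) := r_M - \rho(t)$ and multiplying through by $r_M - v(t)$, this rearranges into
\[
v'(t) + r_M v(t) \; = \; v(t)^2 + \frac{r_M d}{2t} + o\!\left(\frac{1}{t}\right).
\]

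To close the argument I would run a short two-step bootstrap. Since $v(t) \to 0$ by Theorem~\ref{Theorem1}, fixing any $\varepsilon \in (0,r_M)$ one eventually has $v(t)^2 \leq \varepsilon v(t)$, hence $v' + (r_M - \varepsilon) v \leq O(1/t)$, and a standard Grönwall/integrating-factor estimate using $\int_T^t e^{(r_M-\varepsilon) s}/s\, \dd s \sim e^{(r_M-\varepsilon) t}/((r_M-\varepsilon)t)$ produces $v(t) = O(1/t)$. Plugging this back, the nonlinearity $v^2 = O(1/t^2) = o(1/t)$ becomes negligible and one is left with the genuinely linear ODE $v'(t) + r_M v(t) = r_M d/(2t) + o(1/t)$, whose solution, via the integrating factor $e^{r_M t}$ and the same asymptotic for $\int e^{r_M s}/s\,\dd s$, is $v(t) \sim d/(2t)$, which is \eqref{rhosim}. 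The main delicate point is precisely this last step: the raw identity only controls the combination $\rho + \rho'/\rho$, and one must absorb the quadratic remainder $v^2$ using the a priori decay $v \to 0$ before the linear asymptotics can be read off cleanly.
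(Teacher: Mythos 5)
Your proof is correct, and in both halves it departs from the paper's argument in an instructive way. For the weights, the paper first extracts a weak-$*$ convergent subsequence via Theorem~\ref{Theorem1}, applies Laplace's method on each ball $B(\bar{x}_i,\varepsilon)$ to deduce that $J(t_k)$ in \eqref{defJ} converges to some $K>0$ and hence to identify the coefficients, and only then concludes uniqueness of the limit point; your ratio $\rho(t)\,I_\varphi(t)/I(t)$ computes the limit of $\int_\Omega\varphi\, n(t,\cdot)\,{\rm d}x$ directly for every test function, so no subsequence extraction is needed and Theorem~\ref{Theorem1} is only invoked for $\rho(t)\to r_M$. For the rate, the paper never writes an ODE for $v=r_M-\rho$: it shows $\int_0^t(r_M-\rho(s))\,{\rm d}s\sim\frac{d}{2}\ln t$ and, separately, that $\rho$ admits an expansion $a+b/t+o(1/t)$ (by expanding $u=e^{\int_0^t\rho}$ through $u'=\int_\Omega n^0e^{rt}\,{\rm d}x$), which forces $a=r_M$ and $b=-\frac{d}{2}$. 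Your Gr\"onwall bootstrap on $v'+r_Mv=v^2+\frac{r_Md}{2t}+o(1/t)$ reaches the same conclusion and correctly identifies the absorption of $v^2$ via the a priori decay $v\to 0$ as the step needing care.

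The one point you should not leave implicit is the identity $I'(t)/I(t)=r_M-\frac{d}{2t}+o(1/t)$: asymptotic expansions cannot in general be differentiated term by term, so "logarithmic differentiation" of \eqref{Laplace_further} applied to $I$ alone is not a proof. The fix is cheap and is essentially what the paper does with $u$ and $u'$: note that $I'(t)=\int_\Omega n^0(x)r(x)e^{r(x)t}\,{\rm d}x$ is itself a Laplace integral with amplitude $n^0r\in C^1$ near each $\bar{x}_i$, apply \eqref{Laplace_further} to $I'$, and recover the expansion of $I$ by integrating that of $I'$ using the asymptotics of $\int_1^te^{r_Ms}s^{-\alpha}\,{\rm d}s$. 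The two expansions obtained this way are consistent, and their quotient gives exactly $r_M-\frac{d}{2t}+o(1/t)$; with that supplied, your argument is complete.
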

\begin{proof}
Throughout the proof we will use the following notation 
\beq
\label{defJ}
J(t) := \frac{e^{r_M t} e^{- \int_0^{t} \rho(s) \, ds}}{t^{d/2}}.
\eeq

The results established by Theorem~\ref{Theorem1} ensure that we can extract a subsequence $n(t_k,\cdot)$ such that
$$
n(t_k,\cdot) \xrightharpoonup[k  \rightarrow \infty]{} r_M \, \sum_{i=1}^N a_i \, \delta_{\bar{x}_i}(x) \;\; \text{ with } \;\; \sum_{i=1}^N a_i =1.
$$
Moreover, considering $\e>0$ small enough so that $\bar{x}_i$ is the only maximum point of the function $r(x)$ in the ball $B(\bar{x}_i, \e)$ for every $i=1, \ldots, N$ and integrating over $B(\bar{x}_i, \e)$ both sides of the expression~\eqref{Implicit} for $n(t,x)$ with $t=t_k$ we find that
\beq
\label{covktoinf}
\int_{B(\bar{x}_i, \e)} n(t_k,x) \, \ddx = \left(\int_{B(\bar{x}_i, \e)} n^0(x) \, e^{r(x) t_k} \, \ddx\right) \, \displaystyle{e^{- \int_0^{t_k} \rho(s) \, \dd s}}. 
\eeq
The long-time behaviour of the integral on the right-hand side of the above equation can be characterised using Laplace's method. In particular, the asymptotic relation~\eqref{Laplace} ensures that if $\bar{x}_i \in \supp(n^0)$ then
\beq
\label{Laplace_est}
\int_{B(\bar{x}_i, \e)} n(t_k,x) \, \ddx \sim (2 \pi)^{d/2} \frac{n^0(\bar{x}_i)}{\sqrt{|\det(H_i)|}}  \; J(t_k) \quad \text{as } k \to \infty,
\eeq
where $J$ is defined according to~\eqref{defJ}. On the other hand, the asymptotic relation~\eqref{Laplace_zero} ensures that if $\bar{x}_i \notin \supp(n^0)$ then
\beq
\label{Laplace_est_zero}
\int_{B(\bar{x}_i, \e)} n(t_k,x) \, \ddx  = o \left(\frac{1}{\sqrt{|\det(H_i)|}}  \; J(t_k)\right) \quad \text{as } k \to \infty.
\eeq
Taken together, the integral identity~\eqref{covktoinf} and the asymptotic relations~\eqref{Laplace_est} and \eqref{Laplace_est_zero} allow us to conclude that there exist some real constants $K>0$ and $A>0$ such that
\beq
\label{limitlap}
J(t_k)  \; \xrightarrow[k\rightarrow\infty] \; K
\eeq
and
$$
a_i = A \, \frac{n^0(\bar{x}_i)}{\sqrt{|\det(H_i)|}} \quad \text{with} \quad \sum_{i=1}^N a_i = 1.
$$
We remark that $K$ cannot be $0$ because otherwise $\rho(t)$ would converge to $0$, which cannot be since $\rho(t) \to r_M$ [\emph{cf.} the asymptotic result~\eqref{e.th1rho}]. Hence, the coefficients $a_i$ that define the limit measure are uniquely determined and the limit measure is unique. This ensures that the whole trajectory $n(t,\cdot)$ converges to the limit point given by~\eqref{e.prop} and~\eqref{alphaiIDE} as $t \to \infty$.

To prove claim~\eqref{rhosim} we proceed as follows. We note that the asymptotic result~\eqref{limitlap} now holds true for $J(t)$ and not for a mere subsequence $J(t_k)$. This implies that
\begin{equation}
\label{Integrated}
\int_0^t (r_M - \rho(s))\, \dd s \sim \frac{d}{2} \ln(t) \quad \text{as } t \to \infty.
\end{equation}

To conclude we only need to show that $\rho(t)$ has an asymptotic expansion of the form $a + \frac{b}{t} + o\left(\frac{1}{t}\right)$ as $t \to \infty$. The coefficients $a$ and $b$ are then necessarily $r_M$ and $-\frac{d}{2}$, owing to~\eqref{Integrated}. 

From now on, $C_1$ and $C_2$ will denote some generic real constants which might vary from line to line. 

We choose again $\e>0$ small enough so that $\bar{x}_i$ is the only maximum point of the function $r(x)$ in the ball $B(\bar{x}_i, \e)$ for every $i=1, \ldots, N$. Integrating over $B(\bar{x}_i, \e)$ both sides of the expression~\eqref{Implicit} for $n(t,x)$ we find that
$$
\rho(t) = e^{- \int_0^{t} \rho(s) \, \dd s} \sum_{i=1}^N \int_{B(\bar{x}_i, \e)} n^0(x) e^{r(x) t} \, \ddx \; + \; e^{- \int_0^{t} \rho(s) \, \dd s} \int_{\Omega \backslash \cup_{i=1}^N B(\bar{x}_i, \e)} n^0(x) e^{r(x) t} \, \ddx.
$$
The second term on the right-hand side of the above equation decays exponentially to~$0$ as $t \to \infty$, since $\rho(t) \to r_M$. Moreover, choosing $\e$ small enough so that -- under the additional assumption that the functions $n^0(x)$ and $r(x)$ are, respectively, of class $C^1$ and $C^3$ in a neighbourhood of each maximum point $\bar{x}_1, \ldots, \bar{x}_N$ -- we have $n^0 \in C^1\left(B(\bar{x}_i, \e) \right)$ and $r \in C^3\left(B(\bar{x}_i, \e) \right)$ for every $i=1, \ldots, N$, we can use the asymptotic expansion~\eqref{Laplace_further} and in so doing obtain the following asymptotic expression for the first integral on the right-hand side of the latter equation
$$
\int_{B(\bar{x}_i, \e)} n^0(x) e^{r(x) t} \, \ddx = \left(C_1 + \frac{C_2}{t} + o\left(\frac{1}{t}\right)\right) \frac{e^{r_M t}}{t^{\frac{d}{2}}} \quad \text{as } t \to \infty.
$$
Taken together, these results yield
\beq
\label{rhoJ}
\rho(t) = J(t) \left[ C_1 + \frac{C_2}{t} + o\left(\frac{1}{t}\right)\right] \quad \text{as } t \to \infty,
\eeq
with $J$ defined according to~\eqref{defJ}. 

We now prove that 
\beq
\label{Jnew}
J(t) = C_1 + \frac{C_2}{t} + o\left(\frac{1}{t}\right) \quad \text{as } t \to \infty,
\eeq
from which we can infer that $\rho(t)$ satisfies an asymptotic expansion of the same form, thus concluding the proof. In order to prove~\eqref{Jnew}, we notice that a sufficient condition for this to hold is that the function $u(t):= e^{\int_0^{t} \rho(s) \, ds}$ satisfies the estimate 
\beq
\label{newu}
u(t) = \frac{e^{r_M t}}{t^{d/2}} \left(C_1 + \frac{C_2}{t} + o\left(\frac{1}{t}\right)\right) \quad \text{as } t \to \infty.
\eeq
In order to prove~\eqref{newu}, we differentiate $u$ to obtain 
\beq
\label{newup}
u'(t) = \rho(t)  e^{\int_0^{t} \rho(s) \, ds} = \int_\Omega n^0(x) e^{r(x) t} \, \ddx = \frac{e^{r_M t}}{t^{d/2}} \left(C_1+ \frac{C_2}{t} + o\left(\frac{1}{t}\right)\right) \quad \text{as } t \to \infty,
\eeq
where the last equality has been established above. Since
$$
\int_{1}^t \frac{e^{s}}{s^{\alpha}}\, {\rm d}s= \frac{e^{ t}}{t^{\alpha}} \left(C_1 + \frac{C_2}{t} + o\left(\frac{1}{t}\right) \right)\quad \text{as } t \to \infty
$$
for any $\alpha > 0$, we can conclude that estimate~\eqref{newup} still holds true after integration and, therefore, estimate~\eqref{newu} is satisfied.
\end{proof}

\begin{remark} In the case where $d=1$, expression~\eqref{alphaiIDE} reads as
$$
a_i = A \frac{n^0(\bar{x}_i)}{\sqrt{|r''(\bar{x}_i)|}} \quad \text{for } \; i=1, \ldots, N.
$$
\end{remark}

\begin{remark}
The results of Theorem~\ref{Weight_IDE} can be extended to the case where some maximum points of the function $r(x)$ belong to the boundary $\partial \Omega$, a case that might be relevant for applications. In particular, letting $\partial \Omega$ be sufficiently smooth and using Laplace's method one can prove that if $\bar{x}_j \in \arg \max(r) \cap \partial \Omega$ is a stationary point of $r(x)$ (\textit{i.e.} $\nabla r(x) = 0$), then 
$$
a_j=  \frac{A}{2} \frac{n^0(\bar{x}_j)}{\sqrt{|\det(H_j)|}}.
$$
This implies that, all other things being equal, the weight in front of a Dirac mass centred at a boundary point will be half that of a Dirac mass centred at an interior point. On the other hand, if $\bar{x}_j \in \arg \max(r) \cap \partial \Omega$ is a nonstationary point of the restriction of $r(x)$ to the boundary, and there is at least one maximum point of $r(x)$ that belongs to the interior of $\Omega$, then the weight $a_j$ in front of $\delta_{\bar{x}_j}(x)$ will be zero (\textit{i.e.} the mass in a neighbourhood of $\bar{x}_j$ will vanish as $t \to \infty$). 
\end{remark}
\begin{remark}
The asymptotic relation~\eqref{rhosim} shows that the integral $\rho(t)$ of the solution to the IDE~\eqref{IDE} complemented with~\eqref{defR} converges to $r_M$ more slowly than the solution of the related logistic ordinary differential equation
$$
\frac{\dd N}{\dd t} = (r_M-N) N, \quad N \equiv N(t), \quad t \in (0,\infty),
$$
which converges exponentially to $r_M$ as $t \to \infty$. Moreover, whilst
$$
\sgn\left(\frac{\dd N(t)}{\dd t}\right) = \sgn\left(r_M-N(0)\right) \quad \forall t \in \mathbb{R}^+,  
$$
which means that $N(t)$ will approach the asymptotic value $r_M$ from below if $N(0)<r_M$ and from above if $N(0)>r_M$, the asymptotic relation~\eqref{rhosim} indicates that $\rho(t)$ will always approach $r_M$ from below independently from the value of $\rho(0)$. This also implies that if $\rho(0) > r_M$ then the derivative of $\rho(t)$ will change sign at least once on $\mathbb{R}^+$.

It is also worth comparing the speed of convergence given by the asymptotic relation~\eqref{rhosim} with the one that can be obtained based on the best estimates currently available in the literature, namely 
$$
\left(r^M - \rho(t)\right)^2 + \int_\Omega \left(r^M - r(x)\right) n(t,x)\, \ddx = \mathcal{O}\left(\frac{log(t)}{t}\right) \quad \text{as } t \to \infty.
$$
Furthermore, this cannot vanish as $\displaystyle{\mathcal{O}\left(\frac{1}{t^\alpha }\right)}$ for any $\alpha > 1$~\cite{Jabin2011, Pouchol2017b}.
\end{remark}

\subsection{Numerical solutions}
\label{IDEsimul}
To confirm the asymptotic results established by Theorem~\ref{Weight_IDE}, we solve numerically the Cauchy problem~\eqref{Basis}. In particular, we approximate the IDE~\eqref{IDE} complemented with~\eqref{defR} using the forward Euler method with step size $0.01$. We select a uniform discretisation of the interval $\Omega := [-1,2]$ consisting of $1000$ points as the computational domain of the independent variable $x$, and we consider $t \in [0,200]$. All numerical computations are performed in {\sc Matlab}. 

We choose the initial condition
\beq
\label{ICnumIDE}
n^0(x) \equiv \frac{2}{3} \quad \text{so that} \quad \int_{\Omega} n^0(x) \, \ddx = 2
\eeq
and we use the following definition 
\beq
\label{rnumIDE}
r(x) := e^{\frac{-\left(x+0.5\right)^2}{0.01}} + e^{\frac{-\left(x-1\right)^2}{0.1}},
\eeq
which satisfies the assumptions of Theorem~\ref{Weight_IDE}. As shown by the plot in Figure~\ref{Figr1}, the function $r(x)$ defined according to~\eqref{rnumIDE} has two maximum points, that is, $\bar{x}_1 \in [-1,0]$ and $\bar{x}_2 \in [0,2]$.
\begin{figure}[h!]
\begin{center}
\includegraphics[scale=0.4]{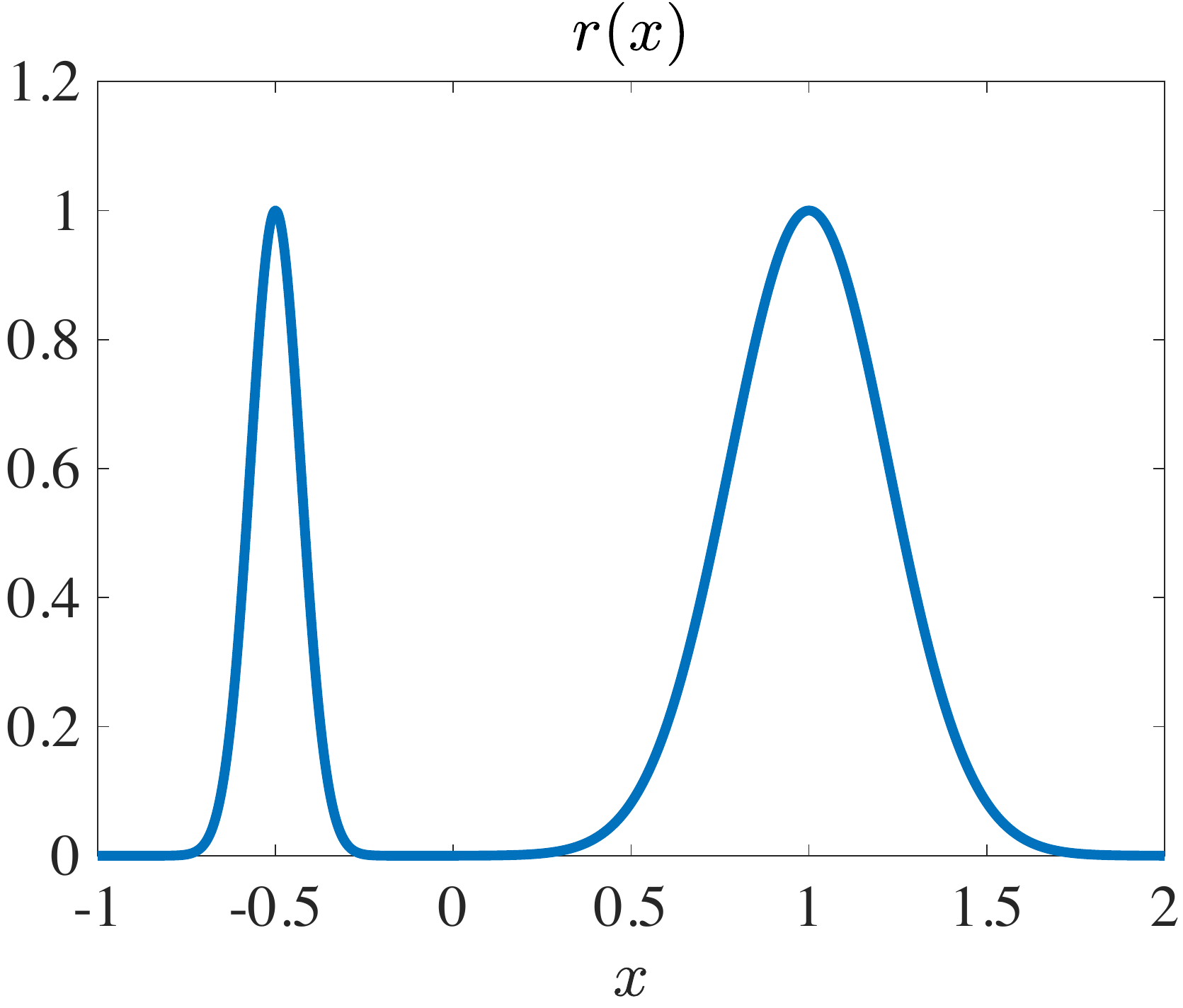}
\end{center}
\caption{Plot of the function $r(x)$ defined according to~\eqref{rnumIDE}.}
\label{Figr1}
\end{figure}

We compute numerically the integrals
\beq
\label{rho12IDE}
\rho_1(t) := \int_{-1}^{\frac{1}{2}} n(t,x) \, \ddx \quad \text{and} \quad \rho_2(t) := \int_{\frac{1}{2}}^2 n(t,x) \, \ddx
\eeq
and the coefficients $a_1$ and $a_2$ given by~\eqref{alphaiIDE}. In the one-dimensional setting considered here, the expressions given by~\eqref{alphaiIDE} read as   
\beq
\label{alphanumIDE}
a_1 = A \frac{n^0(\bar{x}_1)}{\sqrt{|r''(\bar{x}_1)|}} \; \text{ and } \; a_2 = A \frac{n^0(\bar{x}_2)}{\sqrt{|r''(\bar{x}_2)|}} \quad \text{with } \; A \; \text{ s.t. } \; a_1 + a_2 = 1.  
\eeq

The results obtained are summarised in Figure~\ref{Fig1} and Figure~\ref{Fig2}. As we would expect based on Theorem~\ref{Theorem1}, the numerical results displayed in Figure~\ref{Fig1} show that $n(t,x)$ becomes concentrated as a sum of two Dirac masses centred at the points $\bar{x}_1$ and $\bar{x}_2$ (left panel), while $\rho(t)$ converges to $r_M$ (right panel). 
\begin{figure}[h!]
\begin{center}
\includegraphics[scale=0.3]{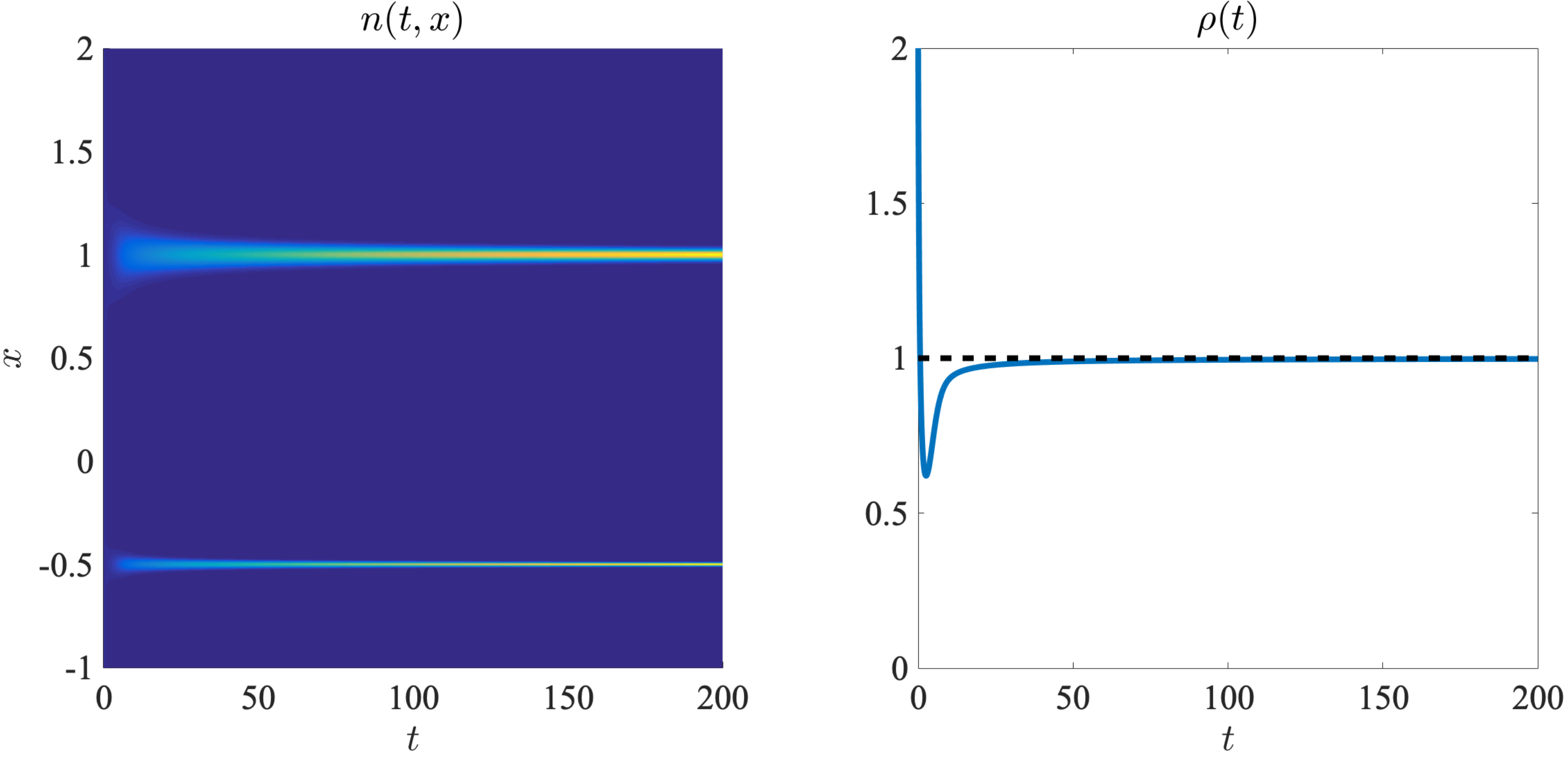}
\end{center}
\caption{Dynamics of $n(t,x)$ (left panel) and $\rho(t)$ (right panel) obtained by solving numerically the Cauchy problem~\eqref{Basis} with $n^0(x)$ and $r(x)$ defined according to~\eqref{ICnumIDE} and~\eqref{rnumIDE}. The black, dashed line in the right panel highlights the value of $r_M$.}
\label{Fig1}
\end{figure}

Furthermore, the curves displayed in the left panel of Figure~\ref{Fig2} show that, in agreement with the results established by Theorem~\ref{Weight_IDE}, 
the integrals $\rho_1(t)$ and $\rho_2(t)$ defined according to~\eqref{rho12IDE} converge, respectively, to the values $a_1 \, r_M$ and $a_2 \, r_M$, with $a_1$ and $a_2$ given by~\eqref{alphanumIDE}, while the curves in the right panel of Figure~\ref{Fig2} show that $\displaystyle{\left(r_M - \rho(t)\right) t  \to \frac{1}{2}}$ as $t \to \infty$, \emph{i.e.} the asymptotic relation~\eqref{rhosim} is verified.     
\begin{figure}[h!]
\begin{center}
\includegraphics[scale=0.3]{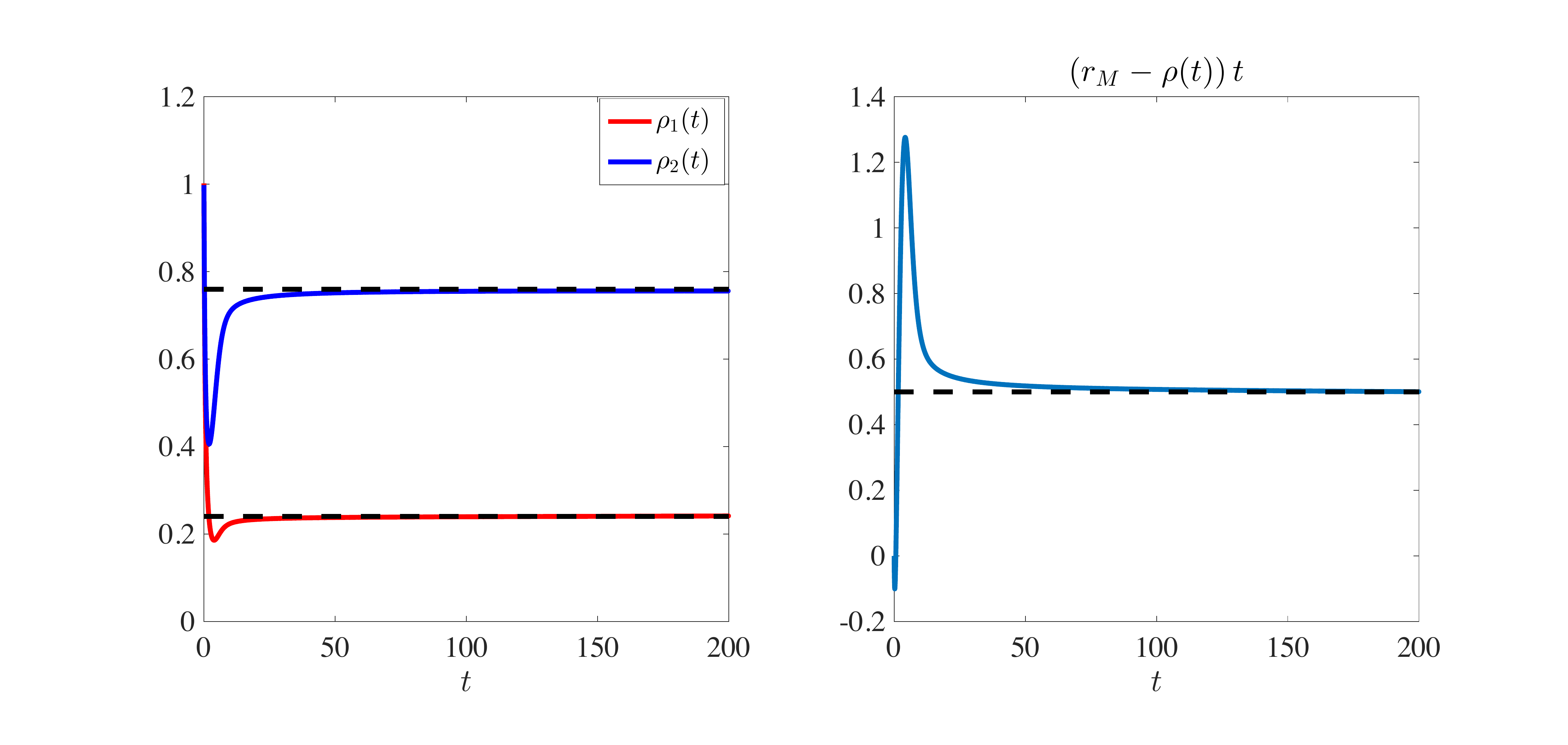}
\end{center}
\caption{Dynamics of the integrals $\rho_1(t)$ (red line) and $\rho_2(t)$ (blue line) defined according to~\eqref{rho12IDE} (left panel), and of the function $\left(r_M - \rho(t)\right) t$ (right panel). The black, dashed lines in the left panel highlight the values of the quantities $a_1 \, r_M$ and $a_2 \, r_M$,  with $a_1$ and $a_2$ given by~\eqref{alphanumIDE}, while the black, dashed line in the right panel corresponds to the asymptotic value of $\left(r_M - \rho(t)\right) t$ given by~\eqref{rhosim}. The integrals $\rho_1(t)$, $\rho_2(t)$ and $\rho(t)$ are computed using the numerical solution of the Cauchy problem~\eqref{Basis} subject to the initial condition~\eqref{ICnumIDE} with $r(x)$ defined according to~\eqref{rnumIDE}.}
\label{Fig2}
\end{figure}

\newpage
\section{Long-time behaviour of the initial-boundary value problem~\eqref{BasisMut}}
\label{secPDE}
In this section, we study the asymptotic behaviour of the solutions to the initial-boundary value problem~\eqref{BasisMut} as $t \to \infty$ (Section~\ref{PDEanalysis}), and we provide a sample of numerical solutions that confirm the analytical results obtained (Section~\ref{PDEsimul}). 

\subsection{Asymptotic analysis}
\label{PDEanalysis}
Under assumptions~\eqref{rM} there exists a unique non-negative classical solution $n_{\beta} \in C([0,+\infty), L^1(\Omega)) \cap C^1((0,+\infty), C^{2,\alpha} (\Omega))$ of the initial-boundary value problem~\eqref{BasisMut}~\cite{Coville2013}. Moreover, the behaviour of $n_{\beta}(t,x)$ in the asymptotic regime $t \to \infty$ is known to be governed by the principal eigenpair $(\lambda_{\beta},\psi_{\beta})$ of the elliptic differential operator $L_{\beta}$ (\emph{i.e.} $L_{\beta} \psi_{\beta} = - \lambda_{\beta} \psi_{\beta}$) defined according to~\eqref{Abeta} with zero Neumann boundary condition. The following theorem builds on the method of proof presented in~\cite{Coville2013, Leman2015} and extends previous results by characterising the speed of convergence of $\rho_\beta$ towards its limit as $t \to \infty$.
\\
\begin{theorem}
\label{Asymptotics}
Under assumptions~\eqref{rM}, the solution of the initial-boundary value problem~\eqref{BasisMut} is such that if $\lambda_\beta \geq 0$ then
\beq
\label{rhobetat0zero}
\rho_\beta(t)  \; \xrightarrow[t\rightarrow\infty] \; 0,
\eeq
whereas if $\lambda_\beta < 0$ then
\beq
\label{rhobetat0lambdabeta}
\rho_\beta(t) \; \xrightarrow[t\rightarrow\infty] \; -\lambda_\beta \quad \text{and} \quad n_\beta(t,\cdot) \; \xrightarrow[t\rightarrow\infty] \; -\lambda_\beta \, \psi_\beta \;\; \text{ in } \;\; L^\infty(\Omega).
\eeq
Furthermore, when $\lambda_\beta < 0$, if 
\beq
\label{intalpha2neq0}
\int_\Omega \alpha_2(n^0) \, \ddx \neq 0
\eeq
where $\alpha_2(n^0)$ is the the $L^2(\Omega)$-projection of $n^0$ onto the finite-dimensional eigenspace associated to $\lambda_{2, \beta}$, then there exist some real constants $K_1 \neq 0$ and $K_2~\neq~0$ such that
\beq
\label{rhobetasim}
-\lambda_\beta - \rho_\beta(t) \sim \left\{
      \begin{array}{ll}
        K_1 \, e^{\gamma_\beta t} \quad \text{if} \quad \gamma_\beta >  \lambda_\beta, \\
        K_2 \, e^{\lambda_\beta t} \quad \text{if} \quad \gamma_\beta\leq  \lambda_\beta, 
      \end{array}
    \right.
\quad \text{as }  t \to \infty, 
\eeq
with $\gamma_\beta$ being the opposite to the spectral gap of the operator $L_{\beta}$, which is defined via~\eqref{def:gammabeta}.
\end{theorem}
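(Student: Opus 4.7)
The plan is to linearise the problem via the Hopf-Cole-type change of variables $v_\beta(t,x) := M(t)\, n_\beta(t,x)$ where $M(t) := \exp\!\left(\int_0^t \rho_\beta(s)\,\dd s\right)$. Using $M' = \rho_\beta M$ together with equation~\eqref{BasisMut}, a direct computation shows that $v_\beta$ solves the purely linear Cauchy problem $\partial_t v_\beta = L_\beta v_\beta$ with zero Neumann boundary condition and $v_\beta(0,\cdot) = n^0$. Since $L_\beta$ is self-adjoint on $L^2(\Omega)$ with compact resolvent, spectral theory yields the expansion
\begin{equation*}
v_\beta(t,x) = \sum_{k \geq 1} c_k\, e^{-\lambda_{k,\beta} t}\, \psi_{k,\beta}(x), \qquad c_k := \int_\Omega n^0\, \psi_{k,\beta}\,\dd x,
\end{equation*}
where $\lambda_{1,\beta} = \lambda_\beta$, $\psi_{1,\beta} = \psi_\beta$, and Lemma~\ref{eigenpair} combined with $n^0 \geq 0$, $n^0 \not\equiv 0$ forces $c_1 > 0$. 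Since $M'(t) = \int_\Omega v_\beta(t,x)\,\dd x$ by construction, integrating the expansion yields
\begin{equation*}
M'(t) = \sum_{k \geq 1} c_k I_k\, e^{-\lambda_{k,\beta} t}, \qquad I_k := \int_\Omega \psi_{k,\beta}\,\dd x,
\end{equation*}
whose leading term is $c_1 I_1 e^{-\lambda_\beta t}$ with $I_1 > 0$.

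From this representation, the convergence statements~\eqref{rhobetat0zero}--\eqref{rhobetat0lambdabeta} follow by distinguishing the sign of $\lambda_\beta$, along the lines of~\cite{Coville2013, Leman2015}. If $\lambda_\beta > 0$, then $M'(t)$ is exponentially integrable, so $M(t) \to M_\infty \in (0,\infty)$ and $\rho_\beta(t) = M'(t)/M(t) \to 0$; if $\lambda_\beta = 0$, then $M(t) \sim c_1 I_1\, t$ and again $\rho_\beta(t) \to 0$ (at rate $1/t$). If $\lambda_\beta < 0$, the principal mode dominates both $M'(t) \sim c_1 I_1 e^{-\lambda_\beta t}$ and $M(t) \sim \frac{c_1 I_1}{-\lambda_\beta} e^{-\lambda_\beta t}$, so $\rho_\beta(t) \to -\lambda_\beta$. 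Dividing the spectral expansion of $v_\beta$ by $M(t)$ and invoking parabolic regularity to upgrade the $L^2$-expansion to a uniform one for $t$ bounded away from $0$, one obtains the $L^\infty$-convergence $n_\beta(t,\cdot) \to -\lambda_\beta\,\psi_\beta$ under the implicit normalisation $I_1 = 1$.

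The genuinely new ingredient is the speed estimate~\eqref{rhobetasim}, for which the plan is to retain the next-order term in both series. Under assumption~\eqref{intalpha2neq0}, the second eigenspace contributes $\tilde{c}_2\, e^{-\lambda_{2,\beta} t}$ to $M'(t)$ with $\tilde{c}_2 = \int_\Omega \alpha_2(n^0)\,\dd x \neq 0$, and time integration produces for $M(t)$ a similar expansion supplemented by a purely constant integration remainder $D$ coming from evaluating the exponentials at $t = 0$. A first-order expansion of the quotient then reveals
\begin{equation*}
-\lambda_\beta - \rho_\beta(t) = K_1\, e^{(\lambda_\beta - \lambda_{2,\beta})t} + K_2\, e^{\lambda_\beta t} + \text{(faster-decaying terms)},
\end{equation*}
with $K_1$ explicitly proportional to $\tilde c_2$ (hence non-zero by~\eqref{intalpha2neq0}) and $K_2$ explicitly proportional to $D$. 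Identifying the first exponent with $\gamma_\beta$ in the sense of Section~\ref{Problem}, the slower of the two decays governs the leading behaviour, producing the dichotomy~\eqref{rhobetasim}.

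The main obstacle I anticipate is the careful bookkeeping of these two competing next-to-leading contributions: one is genuinely spectral (the $\lambda_{2,\beta}$-eigenspace contribution) and the other is an integration remainder from the principal exponential, and \emph{a priori} either may dominate depending on the position of $\lambda_{2,\beta}$ relative to zero; verifying that the corresponding constants $K_1, K_2$ are indeed non-zero requires untangling several cancellations that occur in the expansion of the quotient. A secondary technical point is to justify that the tail $\sum_{k \geq 3} c_k e^{-\lambda_{k,\beta} t}\psi_{k,\beta}$ converges in $L^\infty(\Omega)$ at a rate strictly faster than $e^{-\lambda_{2,\beta} t}$, which is handled by parabolic smoothing of $v_\beta$ (trading initial $L^2$-regularity for $C^{2,\alpha}$-bounds for $t > 0$) together with the discrete growth of the eigenvalues $\lambda_{k,\beta}$.
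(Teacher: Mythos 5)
Your proposal is correct and takes essentially the same route as the paper: the paper sets $u_\beta = w\, n_\beta$ with $w' = (\rho_\beta + \lambda_\beta)\,w$, which is exactly your $M(t)$ multiplied by $e^{\lambda_\beta t}$, expands $\int_\Omega u_\beta\,\ddx$ in the eigenbasis of $L_\beta$ to extract the $e^{\gamma_\beta t}$ correction, and recovers your \emph{integration remainder} as the $e^{\lambda_\beta t}$ term in the Duhamel formula for $w$, yielding the same dichotomy between $\gamma_\beta$ and $\lambda_\beta$. Two minor cautions: align your exponent with the paper's sign convention $\gamma_\beta = \lambda_{2,\beta} - \lambda_\beta$ from~\eqref{def:gammabeta}, and be aware that the non-vanishing of the constant multiplying $e^{\lambda_\beta t}$ (your $D$, the paper's $\tilde C$) is asserted rather than verified in the paper as well.
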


\begin{proof}
Let $u_\beta(t,x) := w(t) \, n_\beta(t,x)$ where $w(t)$ solves the Cauchy problem 
\beq
\label{CPa}
\begin{cases}
\displaystyle{w' = \left(\rho_\beta(t) + \lambda_\beta\right) \, w, \quad w \equiv w(t), \quad t \in (0,\infty)},
\\
w(0) = 1.
\end{cases}
\eeq
We have
\begin{eqnarray*}
\dfrac{\partial u_\beta}{\partial t} = w' \, n_\beta + w \, \dfrac{\partial n_\beta}{\partial t} &=& w' \, n_\beta + w \, \big[\big(r(x) - \rho_\beta(t)\big) n_\beta + \beta \, \Delta n_\beta\big] 
\\
&=& \left(w' -  \rho_{\beta}(t) w \right) n_{\beta} + w \big(r(x) n_\beta + \beta \, \Delta n_\beta \big)
\\
&=& \left(w' -  \rho_{\beta}(t) w \right) n_{\beta} + L_{\beta}[u_{\beta}].
\end{eqnarray*}
Using the fact that $w(t)$ solves the Cauchy problem~\eqref{CPa} and $n_{\beta}(t,x)$ is the solution of the initial-boundary value problem~\eqref{BasisMut}, we obtain the following initial-boundary value problem for $u_{\beta}(t,x)$
$$
\begin{cases}
\displaystyle{\frac{\partial u_{\beta}}{\partial t} - L_{\beta}[u_{\beta}] =  \lambda_\beta u_{\beta}, \quad u_{\beta} \equiv u_{\beta}(t,x), \quad (t,x) \in (0,\infty) \times \Omega},
\\\\
\nabla u_{\beta}(t,x) \cdot \nu(x) = 0, \quad (t,x) \in (0,\infty) \times \partial \Omega,
\\\\
u_{\beta}(0,x) = n^0(x), \quad  n^0 \in C(\Omega), \quad n^0 \geq 0, \quad n^0 \not\equiv 0.
\end{cases}
$$
This is a standard parabolic problem, the solution of which is such that
$$
u_\beta(t,\cdot) \; \xrightarrow[t\rightarrow\infty] \; K \, \psi_\beta \;\; \text{ in } \;\; L^\infty(\Omega) \;\; \text{ for some } \;\; K>0. 
$$
Since
$$
\int_{\Omega} u_\beta(t,x) \, \ddx = w(t) \, \rho_{\beta}(t) \quad \text{and} \quad \int_{\Omega} \psi_\beta(x) \, \ddx =1, 
$$
the above convergence result yields
\beq
\label{arhoconv}
w(t) \, \rho_\beta(t) \; \xrightarrow[t\rightarrow\infty] \; K.
\eeq
Hence the ordinary differential equation for $w(t)$ can be rewritten as
\beq
\label{ODEw}
w' = \lambda_\beta \left(\frac{K}{\lambda_\beta} +  w\right) + f(t), 
\eeq
with the function $f(t) := \rho_\beta(t) w(t) - K$ being such that $f(t) \to 0$ as $t \to \infty$. Using~\eqref{ODEw} we conclude that
$$
\text{if } \;\; \lambda_\beta > 0 \;\; \text{ then } \;\; \; w(t) \xrightarrow[t\rightarrow\infty] \; \infty, \;\; \text{whilst if } \;\; \lambda_\beta < 0 \;\; \text{ then } \;\; \; w(t) \xrightarrow[t\rightarrow\infty] \; \frac{K}{- \lambda_\beta}.
$$
These asymptotic results along with the asymptotic result~\eqref{arhoconv} ensure that
$$
\text{if } \;\; \lambda_\beta \geq 0 \;\; \text{ then } \;\; \; \rho_{\beta}(t) \xrightarrow[t\rightarrow\infty] \; 0, \;\; \text{whereas if } \;\; \lambda_\beta < 0 \;\; \text{ then } \;\; \; \rho_{\beta}(t) \xrightarrow[t\rightarrow\infty] \; -\lambda_\beta.
$$
Moreover, recalling that $u_\beta(t,x) = w(t) \, n_\beta(t,x)$ we find that if $\lambda_\beta < 0$ then
$$
n_\beta(t,\cdot) \; \xrightarrow[t\rightarrow\infty] \; -\lambda_{\beta} \, \psi_\beta \;\; \text{ in } \;\; L^\infty(\Omega). 
$$

We now turn our attention to estimate~\eqref{rhobetasim}. We recall that $\rho_\beta(t) = \frac{\int_{\Omega} u_\beta(t,x)\, \ddx}{w(t)}$, and we estimate the numerator and the denominator separately.

For the numerator, we expand $u_\beta$ further in the orthonormal basis associated to the operator $L_\beta$ and integrate to find that there exists some constant $C$ such that
$$
\int_{\Omega} u_\beta(t,x)\, \ddx = K + C e^{\gamma_\beta t} + o \left(e^{\gamma_\beta t}\right) \quad \text{ as } t \to \infty.
$$
If assumption~\eqref{intalpha2neq0} is satisfied then $C \neq 0$. The latter estimate for $\int_\Omega u_\beta(t,x) \, \ddx$ gives also a more detailed characterisation of the behaviour of the function $f(t)$ in~\eqref{ODEw} when $t \to \infty$, that is,
\beq
\label{fasy}
f(t) = \rho_\beta(t) w(t) - K \sim C e^{\gamma_\beta t} \quad \text{ as } t \to \infty.
\eeq

For the denominator, we note that solving~\eqref{ODEw} subject to the initial condition $w(0)=1$ gives
$$
w(t) = - \frac{K}{\lambda_\beta} + \left(1 + \frac{K}{\lambda_\beta}\right) e^{\lambda_\beta t} + \int_0^t f(s) e^{\lambda_\beta (t-s)} \, {\rm d}s.
$$
This along with estimate~\eqref{fasy} allows us to conclude that if $\gamma_\beta > \lambda_\beta$ then 
$$
w(t) = - \frac{K}{\lambda_\beta} +  \tilde C e^{\gamma_{\beta} t} + o\left( e^{\gamma_{\beta} t} \right) \quad \text{ as } t \to \infty,
$$
whereas if $\gamma_\beta \leq \lambda_\beta$ then
$$
w(t) = - \frac{K}{\lambda_\beta} +  \tilde C e^{\lambda_{\beta} t} + o\left( e^{\lambda_{\beta} t}\right) \quad \text{ as } t \to \infty,
$$
for some constant $\tilde C \neq 0$.

Combining these estimates for $\int_\Omega u_\beta(t,x) \, \ddx$ and $w(t)$ yields~\eqref{rhobetasim}, and concludes the proof of Theorem~\ref{Asymptotics}.
\end{proof}

\begin{remark}
Comparing the results established by Theorem~\ref{Asymptotics} [\emph{cf.} asymptotic relation~\eqref{rhobetasim}] with the results established by Theorem~\ref{Weight_IDE} [\emph{cf.} asymptotic relation~\eqref{rhosim}], one can see that there is a clear difference between the speed of convergence of $\rho_\beta(t)$ to its long-time limit $-\lambda_\beta$ and the speed of convergence of $\rho(t)$ to its long time-limit $r_M$.
\end{remark}

\begin{remark}
Some results on the behaviour of $\gamma_\beta$ in the asymptotic regime $\beta \to 0$ are available in the existing literature. In particular, it is known that typically $\gamma_\beta \sim e^{-C/\sqrt{\beta}}$ if $\beta$ tends to $0$~\cite{Simon1984}. Hence, when $\beta$ is small, the exponent in~\eqref{rhobetasim} will be exponentially small.
\end{remark}

The results established by Theorem~\ref{Asymptotics} show that analysing the long-time behaviour of the solution to the initial-boundary value problem~\eqref{BasisMut} comes down to studying the properties of the principal eigenpair $(\lambda_{\beta},\psi_{\beta})$. In particular, a general characterisation of $\lambda_{\beta}$ and $\psi_{\beta}(x)$ can be obtained when $\beta \to 0$. To illustrate this, we define $\beta := \e$, where $\e$ is a small positive parameter, and study the behaviour of $\lambda_{\e}$ and $\psi_{\e}(x)$ in the asymptotic regime $\e \to 0$. Proposition~\ref{Agreement} shows that the limit of $-\lambda_\e \psi_\e$ when $\e \to 0$ is given by a measure which has total mass equal to $r_M$ and consists of a weighted sum of Dirac masses centred at the points $\bar{x}_1, \ldots, \bar{x}_N$. This kind of concentration result is standard in semiclassical analysis (see for instance~\cite{Holcman2006}) but we give here a short proof that applies to our case for the sake of self-containedness.
\\
\begin{proposition}
\label{Agreement}
If assumptions~\eqref{rM} and~\eqref{Regularity2} are satisfied, then
\beq
\label{e.lambdaeps}
-\lambda_{\varepsilon} \; \xrightarrow[\e \rightarrow 0] \; r_M
\eeq
and, up to extraction of subsequences,
\beq
\label{e.propPDE}
\psi_\e(x)  \xrightharpoonup[\varepsilon  \rightarrow 0]{} \sum_{i=1}^N a_i \, \delta_{\bar{x}_i}(x) \;\; \text{ with } \;\; a_i \geq 0 \;\; \text{ and } \;\; \sum_{i=1}^N a_i =1.
\eeq 
\end{proposition}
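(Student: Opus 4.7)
The plan is to establish the eigenvalue convergence~\eqref{e.lambdaeps} first by combining an upper bound from the Rayleigh quotient of Lemma~\ref{eigenpair} with a matching lower bound obtained from a localised test function, then deduce concentration of $\psi_\varepsilon$ from a simple identity obtained by integrating the eigenvalue equation, and finally appeal to Lemma~\ref{lemmaconc} to read off the structure of the limit.

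First, for the convergence $-\lambda_\varepsilon \to r_M$, the upper bound $-\lambda_\varepsilon \leq r_M$ is immediate from~\eqref{lambdabeta}, since $-\mathcal{R}(L_\varepsilon,\phi) \leq \int_\Omega r \phi^2 \, \ddx / \int_\Omega \phi^2 \, \ddx \leq r_M$ for every $\phi \in H^1(\Omega) \setminus \{0\}$. For the matching lower bound, I would fix $\delta > 0$ small enough that $B(\bar{x}_1, \delta) \subset \Omega$, pick a non-zero non-negative test function $\phi_\delta \in C^\infty_c(B(\bar{x}_1, \delta))$, and use the continuity of $r$ to obtain $\int_\Omega r \phi_\delta^2 \, \ddx \geq (r_M - \omega_r(\delta)) \int_\Omega \phi_\delta^2 \, \ddx$, where $\omega_r(\delta) := r_M - \min_{\overline{B(\bar{x}_1,\delta)}} r$. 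This gives
\begin{equation*}
-\lambda_\varepsilon \;\geq\; r_M - \omega_r(\delta) - \varepsilon \, \frac{\int_\Omega |\nabla \phi_\delta|^2 \, \ddx}{\int_\Omega \phi_\delta^2 \, \ddx},
\end{equation*}
so that sending $\varepsilon \to 0$ and then $\delta \to 0$ yields $\liminf_{\varepsilon \to 0}(-\lambda_\varepsilon) \geq r_M$, which combined with the upper bound proves~\eqref{e.lambdaeps}.

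Next, I would integrate the eigenvalue equation $\varepsilon \Delta \psi_\varepsilon + r \, \psi_\varepsilon = -\lambda_\varepsilon \psi_\varepsilon$ over $\Omega$. The zero Neumann boundary condition makes the Laplacian term vanish, and the normalisation $\int_\Omega \psi_\varepsilon \, \ddx = 1$ yields the key identity
\begin{equation*}
\int_\Omega r(x) \, \psi_\varepsilon(x) \, \ddx \;=\; -\lambda_\varepsilon.
\end{equation*}
Rewriting as $\int_\Omega (r_M - r) \psi_\varepsilon \, \ddx = r_M + \lambda_\varepsilon$ and using~\eqref{e.lambdaeps}, the right-hand side tends to $0$; since the integrand is non-negative, Markov's inequality gives, for every $\eta > 0$,
\begin{equation*}
\int_{\{r \leq r_M - \eta\}} \psi_\varepsilon \, \ddx \;\leq\; \frac{1}{\eta} \int_\Omega (r_M - r) \psi_\varepsilon \, \ddx \; \xrightarrow[\varepsilon \to 0]{} \; 0.
\end{equation*}
Because the points $\bar{x}_i$ lie in the interior of $\Omega$ and $r < r_M$ outside any given neighbourhood of $\arg\max(r)$, this shows that $\psi_\varepsilon$ concentrates on the finite set $\{\bar{x}_1, \ldots, \bar{x}_N\}$ in the sense introduced in Section~\ref{Problem}.

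Finally, $(\psi_\varepsilon)$ lies in the unit ball of $\mathcal{M}^1(\overline{\Omega})$, so by Banach--Alaoglu one can extract a subsequence that converges weakly-$*$ to some $\mu \in \mathcal{M}^1(\overline{\Omega})$. The concentration property combined with Lemma~\ref{lemmaconc} forces $\mu = \sum_{i=1}^N a_i \, \delta_{\bar{x}_i}$; positivity of $\psi_\varepsilon$ yields $a_i \geq 0$, and testing the weak-$*$ convergence against the continuous function $\varphi \equiv 1$ together with the $L^1$-normalisation of $\psi_\varepsilon$ gives $\sum_i a_i = 1$, which is~\eqref{e.propPDE}. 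The main conceptual step is the lower bound in the first paragraph: once the eigenvalue convergence is known, the concentration is essentially a one-line consequence of the eigenvalue equation itself, and the proposition makes no claim about the individual values of the $a_i$, which are precisely the object of the finer analysis carried out later in the section.
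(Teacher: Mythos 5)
Your proof is correct and follows essentially the same route as the paper: the upper bound $-\lambda_\varepsilon \leq r_M$ from the Rayleigh quotient, a matching lower bound from a test function localised near a maximum point, and concentration deduced by integrating the eigenvalue equation to obtain $\int_\Omega (r_M - r)\psi_\varepsilon\,\ddx \to 0$. The only (harmless) differences are that the paper uses a scaled Gaussian $\phi_\varepsilon$ in place of your fixed bump $\phi_\delta$ with the double limit $\varepsilon\to 0$ then $\delta\to 0$, and that it bounds $\int_\Omega \varphi\,\psi_\varepsilon\,\ddx$ by writing $\varphi = \frac{\varphi}{r_M-r}\,(r_M-r)$ where you use a Markov-type inequality.
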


\begin{proof}
We divide the proof of Proposition~\ref{Agreement} into two steps. We first prove claim~\eqref{e.lambdaeps} and then claim~\eqref{e.propPDE}. 
\\
\paragraph{Step 1: proof of~\eqref{e.lambdaeps}.}
Since $\lambda_{\e}$ is the principal eigenvalue of the differential elliptic operator $L_{\e}$, we have [\emph{cf.} equation~\eqref{lambdabeta}]   
\beq
\label{Rayeps}
\lambda_{\varepsilon} =\inf_{\phi \in{H^1(\Omega) \setminus \{0\}}} \mathcal{R}(L_{\e},\phi) \quad \text{with} \quad \mathcal{R}(L_{\e},\phi) = \frac{\e \displaystyle{\int_\Omega \left|\nabla \phi(x) \right|^2 \, \ddx - \int_\Omega r(x) \phi^2(x) \, \ddx}}{\displaystyle{\int_\Omega \phi^2(x) \, \ddx}}.
\eeq
We start by noting that
$$
\e \int_\Omega \left|\nabla \phi(x) \right|^2 \, \ddx - \int_\Omega r(x) \phi^2(x) \, \ddx \geq - r_M \int_\Omega \phi^2(x) \, \ddx \quad \forall \, \phi \in{H^1(\Omega)},
$$
and, therefore, $\lambda_{\varepsilon} \geq -r_M$ for all $\e>0$. Thus it suffices to show that $\lim\,  \lambda_{\varepsilon} \leq -r_M$ as $\e \to 0$ in order to prove~\eqref{e.lambdaeps}. To do this we construct a sequence of positive normalised $H^1$-functions $\phi_\varepsilon$ such that $\mathcal{R}(L_{\e},\phi_\varepsilon)$ converges to $-r_M$ as $\e \to 0$. 
We introduce the function 
\beq
\label{Gfun}
G: x \longmapsto C e^{-|x|^2},
\eeq
where $|\cdot|$ denotes the Euclidean norm on $\mathbb{R}^d$ and $C$ is a normalising constant such that $G$ has integral $1$. Recalling the classical result
$$
\frac{1}{\sigma^d} G\left(\frac{x-\bar{x}_i}{\sigma}\right) \xrightharpoonup[\sigma  \rightarrow 0]{} \delta_{\bar{x}_i}(x),
$$
we choose $\displaystyle{\phi_\varepsilon^2 : x \longmapsto \frac{1}{\varepsilon^{\frac{d}{4}}} G\left(\frac{x-\bar{x}_i}{\varepsilon^{\frac{1}{4}}}\right)}$ so that $\phi_\varepsilon^2 \xrightharpoonup[\e  \rightarrow 0]{} \delta_{\bar{x}_i}$. Using the fact that
$$
\int_\Omega r(x) \phi_\varepsilon^2(x) \, \ddx \; \xrightarrow[\e \rightarrow 0] \; r_M, \quad \int_\Omega \phi_\varepsilon^2(x) \, \ddx \; \xrightarrow[\e \rightarrow 0] \; 1
$$
and
$$
\varepsilon \int_\Omega \left|\nabla \phi_\varepsilon(x) \right|^2 \, \ddx = \int_\Omega \left|x-\bar{x}_i\right|^2 \phi_{\varepsilon}^2(x) \, \ddx  \; \xrightarrow[\e \rightarrow 0] \; \left|\bar{x}_i-\bar{x}_i\right|^2 = 0,
$$
we obtain
$$
\mathcal{R}(L_{\e},\phi_{\e})  \; \xrightarrow[\e \rightarrow 0] \; - r_M.
$$
This concludes the proof of~\eqref{e.lambdaeps}.
\\
\paragraph{Step 2: proof of~\eqref{e.propPDE}.} 
The pair $(\lambda_{\varepsilon},\psi_{\varepsilon})$ satisfies the eigenvalue problem
$$
\begin{cases}
\displaystyle{- L_{\e} \, \psi_{\varepsilon} =  \lambda_{\e} \, \psi_{\varepsilon}, \quad \text{in } \; \Omega},
\\
\nabla \psi_{\varepsilon} \cdot \nu = 0, \qquad\;\;\;\;\; \text{on } \;  \partial \Omega,
\end{cases}
$$
and integrating over $\Omega$ we find
$$
\int_\Omega \left(- \lambda_\e - r(x)\right) \, \psi_\e(x) \, \ddx = 0.
$$
Hence,
\beq
\label{e:concpsie}
\int_\Omega \left(r_M - r(x)\right) \, \psi_\e(x) \, \ddx \, \xrightarrow[\e \rightarrow 0] \, 0.
\eeq
Finally, for any  $\varphi \in C\big(\overline{\Omega}\big)$ with $\supp(\varphi) \cap \arg \max(r) = \emptyset$ we have
\begin{eqnarray*}
\left|\int_{\Omega}\varphi(x) \,  \psi_{\varepsilon}(x) \, \ddx \right| &=& \left|\int_{\Omega} \frac{\varphi(x)}{r_M - r(x)} \, \left(r_M - r(x) \right) \,  \psi_{\varepsilon}(x) \, \ddx\right| 
\\
&\leq& \max_{x \in \supp(\varphi)} \left|\left(\frac{\varphi(x)}{r_M - r(x)}\right)\right| \int_\Omega \left(r_M - r(x)\right) \, \psi_\e(x) \, \ddx.
\end{eqnarray*}
The latter integral inequality along with the asymptotic result~\eqref{e:concpsie} yields
$$
\int_{\Omega}\varphi(x) \,  \psi_{\varepsilon}(x) \, \ddx \xrightarrow[\e \rightarrow 0] \, 0 \quad \forall \, \varphi \in C\big(\overline{\Omega}\big) \quad \text{s.t.} \quad \supp(\varphi) \cap \arg \max(r) = \emptyset.
$$
This concludes the proof of~\eqref{e.propPDE}.
\end{proof}

\begin{remark}
The results of Proposition~\ref{Agreement} imply that if $\arg \max(r) = \left\{\bar{x}_1 \right\}$ then the limit of $-\lambda_\e \psi_{\e}$ when $\e \to 0$ is given by the measure $r_M \, \delta_{\bar{x}_1}(x)$.
\end{remark}

\begin{remark}
The proof of Proposition~\ref{Agreement} can be adapted to the case where the function $r(x)$ attains its maximum only on the boundary $\partial \Omega$. We expect that this can be done, as in Laplace's method, by adjusting the normalising constant $C$ in~\eqref{Gfun}, depending on the nature of the maximum at the boundary (stationary or not). However, we consider here only the simpler case corresponding to Proposition~\ref{Agreement}, which suffices for our purposes.
\end{remark}

\begin{remark}
Since $r_M>0$, expression~\eqref{Rayeps} for the Rayleigh quotient is such that if $\e$ is small enough then $-\lambda_\e > 0$. Hence, based on the result of Theorem~\ref{Asymptotics}, we have that $\rho_{\e}(t)$ will not vanish as $t \to \infty$ when $\e$ is sufficiently small.
\end{remark}

In the framework of the results established by Proposition~\ref{Agreement}, to fully characterise the long-time limit of $n_{\e}(t,x)$ when $\e \to 0$ it is necessary to assess whether there exists a unique set of admissible coefficients $a_1, \ldots, a_N$ (\emph{i.e.} if the limit measure is unique); if so, one needs to identify the values of the coefficients that define the only admissible limit measure.

A case where we expect the limit measure to be unique is when the set $\Omega$ and the function $r(x)$ are symmetric with respect to the hyperplane $S$ defined according to~\eqref{defS}. In this case, a complete characterisation of the limit measure is given by the following proposition.

\begin{proposition}
\label{propsym}
Under assumptions~\eqref{rM} and~\eqref{Regularity2}, letting $N=2$ and making the additional symmetry assumptions~\eqref{Omegasym} and~\eqref{rsym}, we have
\beq
\label{withsym}
\psi_{\e}(x)  \xrightharpoonup[\e  \rightarrow 0]{} \frac{1}{2} \big(\delta_{\bar{x}_1}(x) + \delta_{\bar{x}_2}(x)\big).
\eeq 
\end{proposition}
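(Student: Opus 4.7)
The plan is to combine the subsequential concentration result of Proposition~\ref{Agreement} with the transfer of symmetry: from $r$ to $\psi_\e$ via Lemma~\ref{Eigen}, and then from $\psi_\e$ to any weak-$*$ limit via Lemma~\ref{Symmetry}. Since only two Dirac masses are available, the symmetry of the limit measure will be enough to fix the weights.

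First I would note that assumptions~\eqref{Omegasym} and~\eqref{rsym} together with Lemma~\ref{Eigen} give $\hat{\psi}_\e = \psi_\e$ for every $\e>0$. Regarded as a sequence in $\mathcal{M}^1\big(\overline{\Omega}\big)$, the family $\psi_\e$ is therefore symmetric with respect to $S$. Applying Proposition~\ref{Agreement} to an arbitrary sequence $\e_k \to 0$, we can extract a subsequence along which
$$
\psi_{\e_k} \xrightharpoonup[k \to \infty]{} \mu := a_1 \, \delta_{\bar{x}_1} + a_2 \, \delta_{\bar{x}_2}, \qquad a_1, a_2 \geq 0, \quad a_1 + a_2 = 1,
$$
and by Lemma~\ref{Symmetry} the limit measure $\mu$ is itself symmetric with respect to $S$.

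Next I would use the symmetry of $r$ to locate the two maximum points relative to $S$. Since $\hat{r} = r$, the set $\arg\max(r) = \{\bar{x}_1, \bar{x}_2\}$ is invariant under the reflection $x \mapsto \hat{x}$. With $N=2$, the generic and only relevant case is that the two points are exchanged, namely $\hat{\bar{x}}_1 = \bar{x}_2$ (the alternative in which both points lie on $S$ is a non-generic configuration that is not considered here). In that case, choosing a continuous test function $\varphi$ supported in a small neighbourhood of $\bar{x}_1$ disjoint from a neighbourhood of $\bar{x}_2$, the symmetry $\int \varphi \, \dd\mu = \int \hat{\varphi} \, \dd\mu$ reads
$$
a_1 \, \varphi(\bar{x}_1) = a_2 \, \hat{\varphi}(\bar{x}_2) = a_2 \, \varphi(\hat{\bar{x}}_2) = a_2 \, \varphi(\bar{x}_1),
$$
so that $a_1 = a_2$. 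Combined with $a_1 + a_2 = 1$, this forces $a_1 = a_2 = \tfrac{1}{2}$.

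Finally, since every subsequential weak-$*$ limit of $\psi_\e$ is the same measure $\tfrac{1}{2}\big(\delta_{\bar{x}_1}(x) + \delta_{\bar{x}_2}(x)\big)$, the whole family $\psi_\e$ converges weak-$*$ to this limit as $\e \to 0$, which is exactly~\eqref{withsym}. The argument is short and the only potential obstacle is technical: confirming that the symmetry property of the $\psi_\e$, once interpreted as Radon measures, passes to the weak-$*$ limit, which is precisely the content of Lemma~\ref{Symmetry}. No further computation is required.
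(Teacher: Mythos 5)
Your proof is correct and follows essentially the same route as the paper: Lemma~\ref{Eigen} for the symmetry of $\psi_\e$, Proposition~\ref{Agreement} for subsequential concentration on $\{\bar{x}_1,\bar{x}_2\}$, Lemma~\ref{Symmetry} to pass the symmetry to the limit and force $a_1=a_2=\tfrac12$, and uniqueness of the limit point plus weak-$*$ compactness to upgrade to convergence of the whole family. Your explicit aside about the degenerate configuration in which both maximum points lie on $S$ (where the reflection argument gives no information) is a point the paper glosses over by simply asserting $\hat{\bar{x}}_1=\bar{x}_2$, but it does not change the argument.
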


\begin{proof}
Under the symmetry assumptions~\eqref{Omegasym} and~\eqref{rsym} the points $\bar{x}_1 \in \Omega$ and $\bar{x}_2 \in \Omega$ are symmetric with respect to the hyperplane $S$, \emph{i.e.}
$$
\text{if} \quad \bar{x}_1 = (\bar{x}_{1 \,1}, \bar{x}_{1 \,2}, \ldots, \bar{x}_{1 \,d}) \quad \text{then necessarily} \quad \bar{x}_2 = (-\bar{x}_{1 \,1}, \bar{x}_{1 \,2}, \ldots, \bar{x}_{1 \,d}).
$$
Moreover, in the case where $N=2$, the result established by Proposition~\ref{Agreement} implies that
$$
\psi_{\e}(x)  \xrightharpoonup[\e  \rightarrow 0]{} a \, \delta_{\bar{x}_1}(x) + (1-a) \, \delta_{\bar{x}_2}(x), \quad \text{for some } \; a \geq 0.
$$
Finally, Lemma~\ref{Eigen} ensures that $\psi_{\e}$ is symmetric with respect to the hyperplane $S$, and Lemma~\ref{Symmetry} in turn ensures that the weak limit point of $\psi_{\e}$ for $\e \to 0$ is symmetric with respect to the hyperplane $S$ as well. Hence, $a=\frac{1}{2}$ and there is a unique limit point given by
$$
\frac{1}{2} \, \delta_{\bar{x}_1}(x) + \frac{1}{2} \, \delta_{\bar{x}_2}(x).
$$
Since the sequence $\psi_{\e}$ is bounded in $L^1(\Omega)$, the Banach-Alaoglu Theorem ensures that it is relatively (weakly-$*$) compact in $\mathcal{M}^1(\overline\Omega)$. This along with the uniqueness of the limit point gives the convergence of the whole sequence $\psi_{\e}$, \emph{i.e.}
$$
\psi_{\e}(x)  \xrightharpoonup[\e  \rightarrow 0]{} \frac{1}{2} \big(\delta_{\bar{x}_1}(x) + \delta_{\bar{x}_2}(x)\big),
$$
which concludes the proof of Proposition~\ref{propsym}.
\end{proof}

Furthermore, an almost exhaustive characterisation of the limit measure in the absence of particular symmetries is provided by the following proposition, whereby the function $\zeta : \arg \max \left(r\right) \to \mathbb{R}^+$, which is defined as
\beq
\label{armgminzeta}
\zeta(\bar{x}_i) := \sum_{j=1}^{d} \sqrt{|\lambda_j^i|}
\eeq 
where $(\lambda_{j}^i)_{1 \leq j \leq d}$ are the eigenvalues of $H_i$ (each counted with its multiplicity), is used to characterise the concavity of the function $r(x)$ at the maximum points, as it was done in previous papers on semiclassical analysis~\cite{Helffer1984, Helffer1985}.

\begin{proposition}
\label{propnosym}
Under assumptions~\eqref{rM} and~\eqref{Regularity2}, 
\beq
\label{conc_further}
\psi_{\e} \text{ concentrates on the set } \arg \min \left(\zeta\right) \text{ as } \e \to 0.
\eeq
In particular, if
$\arg\min  \left(\zeta \right) = \left\{\bar{x}_m \right\}$ for some $1 \leq m \leq N$, then 
\beq
\label{singleton}
\psi_{\e}(x)  \xrightharpoonup[\e  \rightarrow 0]{} \delta_{\bar{x}_m}(x).
\eeq 
\end{proposition}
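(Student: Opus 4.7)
The plan is to refine Proposition~\ref{Agreement}: its conclusion already gives that every weak-$*$ subsequential limit of $\psi_\e$ has the form $\sum_{i=1}^N a_i \delta_{\bar{x}_i}$ with $a_i \geq 0$ and $\sum_i a_i = 1$, so the task reduces to showing $a_i = 0$ whenever $\zeta(\bar{x}_i) > \min_{1 \leq j \leq N} \zeta(\bar{x}_j)$; the claim~\eqref{singleton} then follows at once from $\sum a_i = 1$ in the singleton case. The key ingredient is the semiclassical two-sided asymptotic
\[
-\lambda_\e = r_M - \sqrt{\tfrac{\e}{2}}\, \min_{1 \leq j \leq N} \zeta(\bar{x}_j) + o(\sqrt{\e}) \qquad \text{as } \e \to 0,
\]
combined with an IMS-type localization of $\psi_\e$.

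For the upper bound I would use the variational formula~\eqref{lambdabeta} with a quasi-mode concentrated at a point $\bar{x}_m \in \arg\min \zeta$: after diagonalizing the positive-definite matrix $-H_m$, a natural choice is
\[
\phi_\e(x) := \chi(x)\, \exp\!\Big(-\tfrac{1}{2\sqrt{2\e}}(x-\bar{x}_m)^T (-H_m)^{1/2}(x-\bar{x}_m)\Big),
\]
with $\chi$ a smooth cutoff equal to $1$ near $\bar{x}_m$. After Taylor-expanding $r$ to second order at $\bar{x}_m$ and rescaling by $y = (x-\bar{x}_m)/\e^{1/4}$, the Rayleigh quotient in~\eqref{Rayeps} reduces, up to $o(\sqrt{\e})$ corrections from the cubic Taylor remainder and the cutoff, to that of the harmonic oscillator $-\Delta_y + \tfrac{1}{2} y^T(-H_m) y$ evaluated on its own ground state; the latter has energy exactly $\sqrt{\e/2}\, \zeta(\bar{x}_m)$, so the variational principle gives $\lambda_\e \leq -r_M + \sqrt{\e/2}\,\min_j \zeta(\bar{x}_j) + o(\sqrt{\e})$.

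For the matching lower bound, let $\delta > 0$ be so small that the balls $B(\bar{x}_i, \delta)$ are pairwise disjoint and contained in $\Omega$, and take a smooth partition of unity $\{\chi_i^2\}_{i=0}^N$ on $\overline{\Omega}$ with $\chi_i$ supported in $B(\bar{x}_i, \delta)$ for $1 \leq i \leq N$ and $\supp(\chi_0) \cap \arg \max(r) = \emptyset$. Multiplying $L_\e \psi_\e = -\lambda_\e \psi_\e$ by $\chi_i^2 \psi_\e$, integrating over $\Omega$, summing in $i$, and using $\sum_i \chi_i^2 \equiv 1$ yields the standard IMS identity
\[
\lambda_\e \int_\Omega \psi_\e^2 \, \ddx = \sum_{i=0}^N \mathcal{R}(L_\e, \chi_i \psi_\e)\, \norm{\chi_i \psi_\e}_{L^2}^2 - \e \int_\Omega \Big(\sum_i |\nabla \chi_i|^2\Big)\psi_\e^2 \, \ddx.
\]
The term $i=0$ satisfies $\mathcal{R}(L_\e, \chi_0 \psi_\e) \geq -r_M + \eta$ for some $\eta > 0$; for $i \geq 1$, the Taylor inequality $r(x) \geq r_M + \tfrac{1}{2}(x-\bar{x}_i)^T H_i (x-\bar{x}_i) - C|x-\bar{x}_i|^3$ on $\supp(\chi_i)$, together with the harmonic-oscillator lower bound in rescaled coordinates, gives $\mathcal{R}(L_\e, \chi_i \psi_\e) \geq -r_M + \sqrt{\e/2}\, \zeta(\bar{x}_i) + o(\sqrt{\e})$.

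Dividing the IMS identity by $\sqrt{\e/2}\,\int \psi_\e^2$ and letting $\e \to 0$ along a subsequence for which $\bar b_i := \lim \norm{\chi_i\psi_\e}_{L^2}^2 / \norm{\psi_\e}_{L^2}^2$ exists, one obtains (after using $\bar b_0 = 0$, forced by the order-one gap $\eta$ against the order-$\sqrt{\e}$ discrepancy) $\sum_{i \geq 1} \bar b_i \zeta(\bar{x}_i) \leq \min_j \zeta(\bar{x}_j)$ with $\sum_{i\geq 1} \bar b_i = 1$, so $\bar b_i = 0$ for every $i$ with $\zeta(\bar{x}_i) > \min \zeta$. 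Translating the vanishing of $L^2$-mass $\bar b_i = 0$ into the vanishing of $L^1$-mass $a_i = 0$ finally relies on the standard pointwise Gaussian bounds on $\psi_\e$ of width $\e^{1/4}$ around each concentration point (apparent from the quasi-mode of the upper bound and from $\int (r_M - r)\psi_\e = r_M + \lambda_\e = O(\sqrt{\e})$), which force $\norm{\chi_i \psi_\e}_{L^2}^2$ and $a_i^2$ to vanish simultaneously. I expect the main technical difficulty to be the sharp local lower bound in the IMS step: it requires a genuine harmonic-oscillator comparison in rescaled variables and a careful accounting of the $o(\sqrt{\e})$ corrections coming from the cubic Taylor remainder, from the cutoff, and from the passage between the Neumann boundary conditions on $\Omega$ and the effectively Dirichlet ones on $B(\bar{x}_i, \delta)$.
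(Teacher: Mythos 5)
Your route is genuinely different from the paper's: the paper reduces the question to the semiclassical problem for $\e\Delta - V$ with $V=-r$ and then simply invokes the Helffer--Sj\"ostrand results, whereas you attempt a self-contained proof via harmonic-oscillator quasi-modes for the upper bound and IMS localization for the lower bound. That skeleton is the correct one (it is essentially how the cited results are proved), and the two-sided expansion $-\lambda_\e = r_M - \sqrt{\e/2}\,\min_j\zeta(\bar{x}_j)+o(\sqrt{\e})$, the quasi-mode, and the IMS identity are all stated correctly; the Neumann-versus-Dirichlet worry is moot since the $\bar{x}_i$ are interior and the $\chi_i$, $i\geq 1$, are compactly supported in $\Omega$.

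However, there is a genuine gap in your last paragraph, and it is located exactly where you declare the step ``standard.'' The IMS energy argument yields $\bar b_i := \lim \norm{\chi_i\psi_\e}_{L^2}^2/\norm{\psi_\e}_{L^2}^2 = 0$ for non-minimal wells, but the weights $a_i$ in \eqref{e.propPDE} are defined through the $L^1$-normalisation $\int_\Omega\psi_\e\,\ddx = 1$, and $\norm{\psi_\e}_{L^2}/\norm{\psi_\e}_{L^1}\to\infty$ (of order $\e^{-d/8}$ for a profile of width $\e^{1/4}$). Cauchy--Schwarz then gives only $a_i \lesssim \sqrt{\bar b_i^{\,\e}}\,\norm{\psi_\e}_{L^2}$, so one needs $b_i^{\e} = o(\e^{d/4})$, whereas the energy argument only produces $b_i^\e = O(\sqrt{\e}+\delta)$ after the double limit in $\e$ and the cutoff radius $\delta$. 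Closing this requires either exponential tunnelling (Agmon) estimates showing the $L^2$-mass near non-minimal wells is $O(e^{-c/\sqrt{\e}})$, or the full local two-sided Gaussian asymptotics of $\psi_\e$ near each well --- and these are precisely the substantive theorems of Helffer--Sj\"ostrand that the paper cites rather than reproves. As written, your argument does not establish \eqref{conc_further}; you should either import those pointwise estimates explicitly as a lemma with a reference, or add an Agmon-estimate step, rather than treating the $L^2\to L^1$ transfer as bookkeeping.
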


\begin{proof}
We note that studying the asymptotic behaviour of $\psi_{\varepsilon}$ when $\e \to 0$ is equivalent to studying the asymptotic behaviour of the principal eigenfunction $\varphi_{\varepsilon}$ of the differential elliptic operator $\varepsilon \Delta - V$ with $V := -r$. The result of Proposition~\ref{Agreement} ensures that the support of the weak limit of $\varphi_{\varepsilon}$ as $\e \to 0$ will be a (possibly improper) subset of the set $\arg \min\left(V\right) = \left(\bar{x}_1, \ldots, \bar{x}_N\right)$. Investigating at which points of this discrete set the weak limit point of the sequence $\varphi_{\varepsilon}$ will actually be concentrated is a fundamental question in semiclassical analysis. Such a question arises in the study of the dynamics of a particle confined within a region of space surrounding a minimum point of the potential $V$ (\emph{i.e.} a potential well) in the asymptotic regime of small noise (\emph{i.e.} when $\e \to 0$)~\cite{Simon1983, Helffer1984, Helffer1985, Holcman2006}. Recasting the problem in this way, we can use the asymptotic results presented in~\cite{Helffer1984, Helffer1985} which ensure that, under assumptions~\eqref{rM} and~\eqref{Regularity2}, when $\varepsilon$ tends to $0$, the principal eigenfunction $\varphi_{\varepsilon}$ concentrates on the set $\displaystyle{\arg\min (\zeta)}$, with $\zeta$ defined via~\eqref{armgminzeta}. Hence, under the additional assumption that the set $\displaystyle{\arg\min (\zeta)}$ coincides with the singleton $\left\{\bar{x}_m \right\}$ for some $1 \leq m \leq N$, we find that $\varphi_{\varepsilon}$ concentrates at the point $\bar{x}_m$ as $\e \to 0$, whence~\eqref{singleton}.
\end{proof}

\begin{remark} 
In the case where $d=1$, the assumption $\arg\min  \left(\zeta \right) = \left\{\bar{x}_m \right\}$ reads as
\beq
\label{armgminzeta1d}
\left\{\bar{x}_m \right\} = \arg\min_{\bar{x}_i \in \arg \max r} |r''(\bar{x}_i)|.
\eeq 
\end{remark}
\begin{remark} 
Comparing the results of Proposition~\ref{propnosym} with the results established by Theorem~\ref{Weight_IDE} one can see that there is a stark difference between the long-time behaviour of the solution to the initial-boundary value problem~\eqref{BasisMut} for $\beta \to 0$ and the long-time behaviour of the solution to the Cauchy problem~\eqref{Basis}. In fact, in the case where the set $\arg\min \left(\zeta\right)$ is reduced to a singleton and $n^0(x) > 0$ for all $x \in \arg \max(r)$, Theorem~\ref{Weight_IDE} shows that the long-time limit of $n(t,x)$ will be given by a sum of multiple Dirac masses with different positive weights, whereas Proposition~\ref{propnosym} shows that the long-time limit of $n_{\beta}(t,x)$ for $\beta \to 0$ will consist of one single Dirac mass. From the point of view of evolutionary dynamics, this implies that, all else being equal, individuals in the phenotypic states $x \in \arg \max(r)$ will coexist in the absence of phenotypic changes, whereas only individuals in the phenotypic state $\bar{x}_m$ will ultimately survive when heritable phenotypic changes occur. This provides a mathematical formalisation of the idea that, while being historically assumed to play a neutral role in evolutionary outcome, heritable phenotypic changes can shape the equilibrium phenotypic distribution of asexual populations with multi-peaked fitness landscapes, even if there is no bias in the generation of novel phenotypic variants.
\end{remark}

\begin{remark}
We remark that when the set $\displaystyle{\arg\min \left(\zeta\right)} $ is not a singleton, it is still possible to go further in reducing the support of the limit point of the sequence $\psi_{\varepsilon}$ as $\e \to 0$. However, the conditions determining which of the coefficients $a_i$ will be different from zero become rather convoluted, as shown by the results of semiclassical analysis presented in~\cite{Holcman2006}. Therefore, we consider here only the simpler case corresponding to Proposition~\ref{propnosym}, which suffices for our purposes.
\end{remark}

\subsection{Numerical solutions}
\label{PDEsimul}
To confirm the asymptotic results established by Propositions~\ref{Agreement}-\ref{propnosym}, we solve numerically the initial-boundary value problem~\eqref{BasisMut}. Numerical solutions are constructed by approximating the diffusion term via a second-order central difference scheme~\cite{leveque2007finite} and then using the forward Euler method with step size $0.01$ to approximate the resulting system of ordinary differential equations.  We select a discretisation of the interval $\Omega := [-1,2]$ consisting of $1000$ points as the computational domain of the independent variable $x$ and let $t \in [0,t_f]$ with $t_f$ being either $200$ or $800$. All numerical computations are performed in {\sc Matlab}. 

We define $\beta=10^{-6}$, choose the initial condition~\eqref{ICnumIDE}, and use either the following definition 
\beq
\label{rnumPDE}
r(x) := e^{\frac{-\left(x+0.5\right)^2}{0.01}} + e^{\frac{-\left(x-1.5\right)^2}{0.01}}
\eeq
or definition~\eqref{rnumIDE}. Definition~\eqref{rnumPDE} satisfies the assumptions of Proposition~\ref{propsym} with $S~:=~\{0.5 \}$ (\emph{cf.} the plot in Figure~\ref{Figr2}), whereas definition~\eqref{rnumIDE} satisfies the assumptions of Proposition~\ref{propnosym} and, as previously noted, it has two maximum points $\bar{x}_1 \in [-1,0]$ and $\bar{x}_2 \in [0,2]$ (\emph{cf.} the plot in Figure~\ref{Figr1}).
\begin{figure}[h!]
\begin{center}
\includegraphics[scale=0.4]{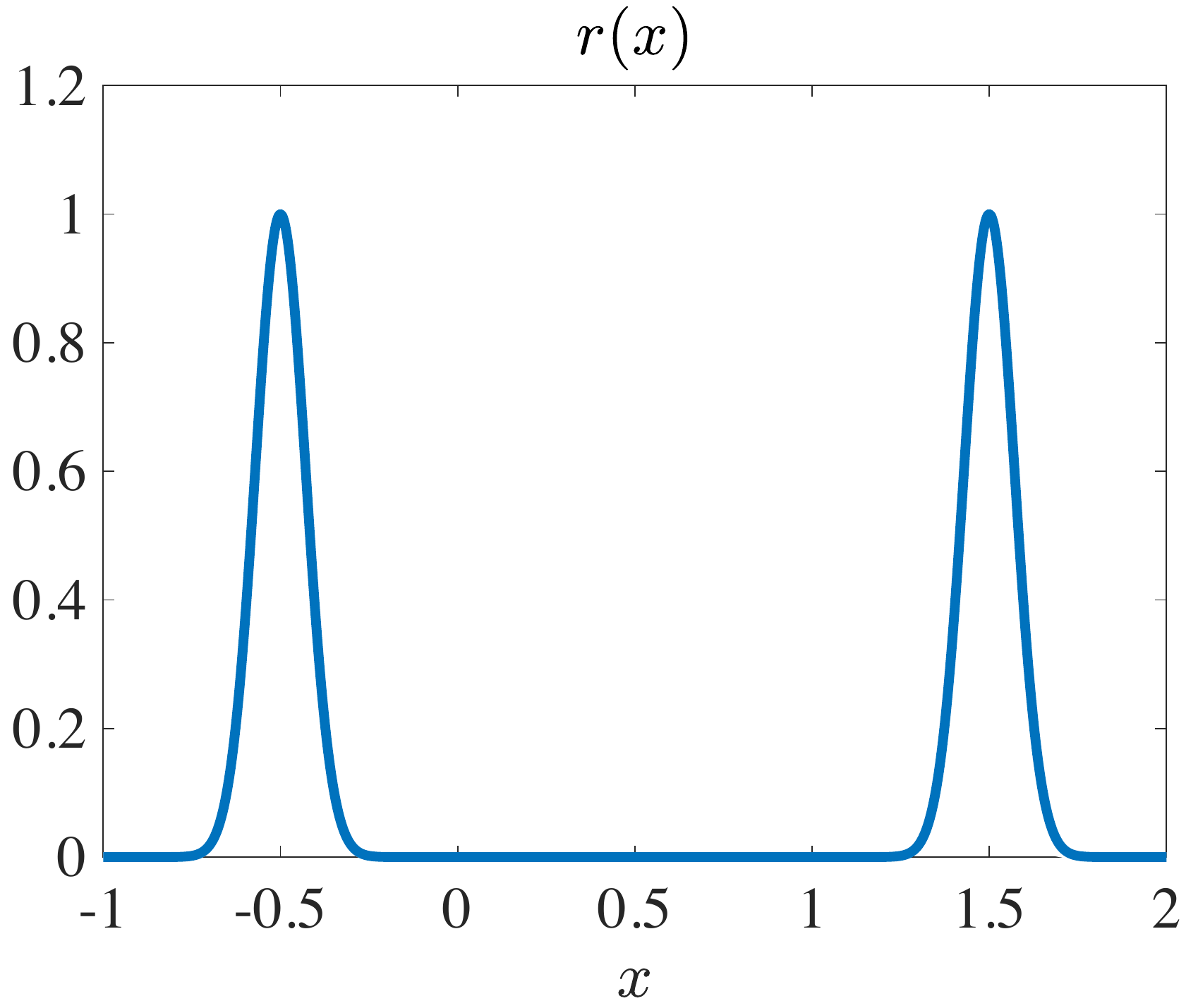}
\end{center}
\caption{Plot of the function $r(x)$ defined according to~\eqref{rnumPDE}.}
\label{Figr2}
\end{figure}

We compute numerically the following integrals
\beq
\label{rho12PDE}
\rho_1(t) := \int_{-1}^{\frac{1}{2}} n_{\beta}(t,x) \, \ddx \quad \text{and} \quad \rho_2(t) := \int_{\frac{1}{2}}^2 n_{\beta}(t,x) \, \ddx.
\eeq

The results obtained are summarised in Figure~\ref{Fig3} and Figure~\ref{Fig4}. As we would expect based on Proposition~\ref{Agreement} and Proposition~\ref{propsym}, the numerical results displayed in Figure~\ref{Fig3} show that when $r(x)$ is defined according to~\eqref{rnumPDE} the solution $n_{\beta}(t,x)$ becomes concentrated as a sum of two Dirac masses centred at the points $\bar{x}_1$ and $\bar{x}_2$, the integral $\rho_{\beta}(t)$ converges to $r_M$, and the integrals $\rho_1(t)$ and $\rho_2(t)$ defined via~\eqref{rho12PDE} both converge to $\displaystyle{\frac{r_M}{2}}$. 
\begin{figure}[h!]
\begin{center}
\includegraphics[scale=0.3]{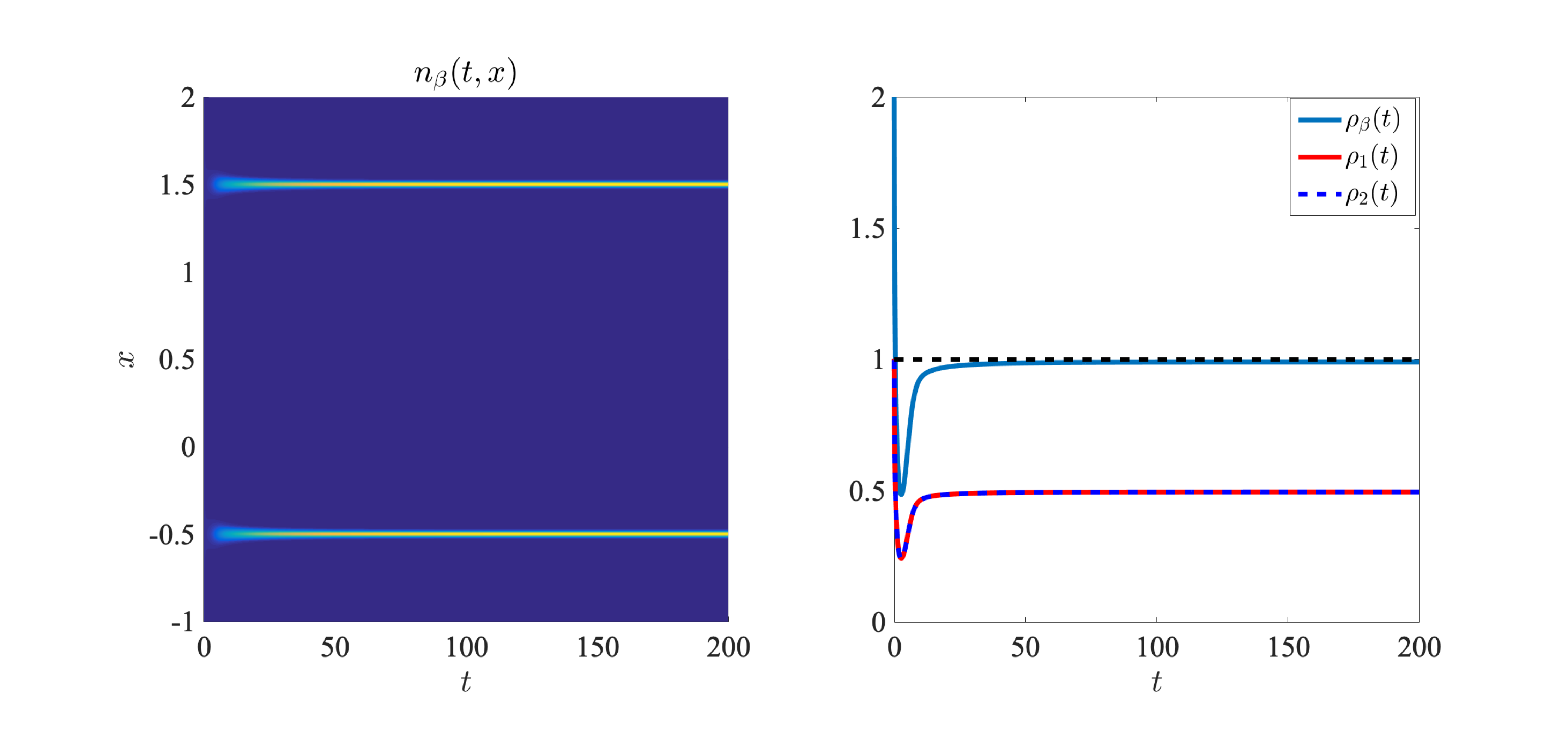}
\end{center}
\caption{Dynamics of $n_{\beta}(t,x)$ (left panel) and $\rho_\beta(t)$ (right panel) obtained by solving numerically the initial-boundary value problem~\eqref{BasisMut} with $\beta=10^{-6}$ and with $n^0(x)$ and $r(x)$ defined according to~\eqref{ICnumIDE} and~\eqref{rnumPDE}. The black, dashed line in the right panel highlights the value of $r_M$, while the red line and the blue, dashed line correspond, respectively, to the integrals $\rho_1(t)$ and $\rho_2(t)$ defined according to~\eqref{rho12PDE}.}
\label{Fig3}
\end{figure}

\begin{figure}[h!]
\begin{center}
\includegraphics[scale=0.3]{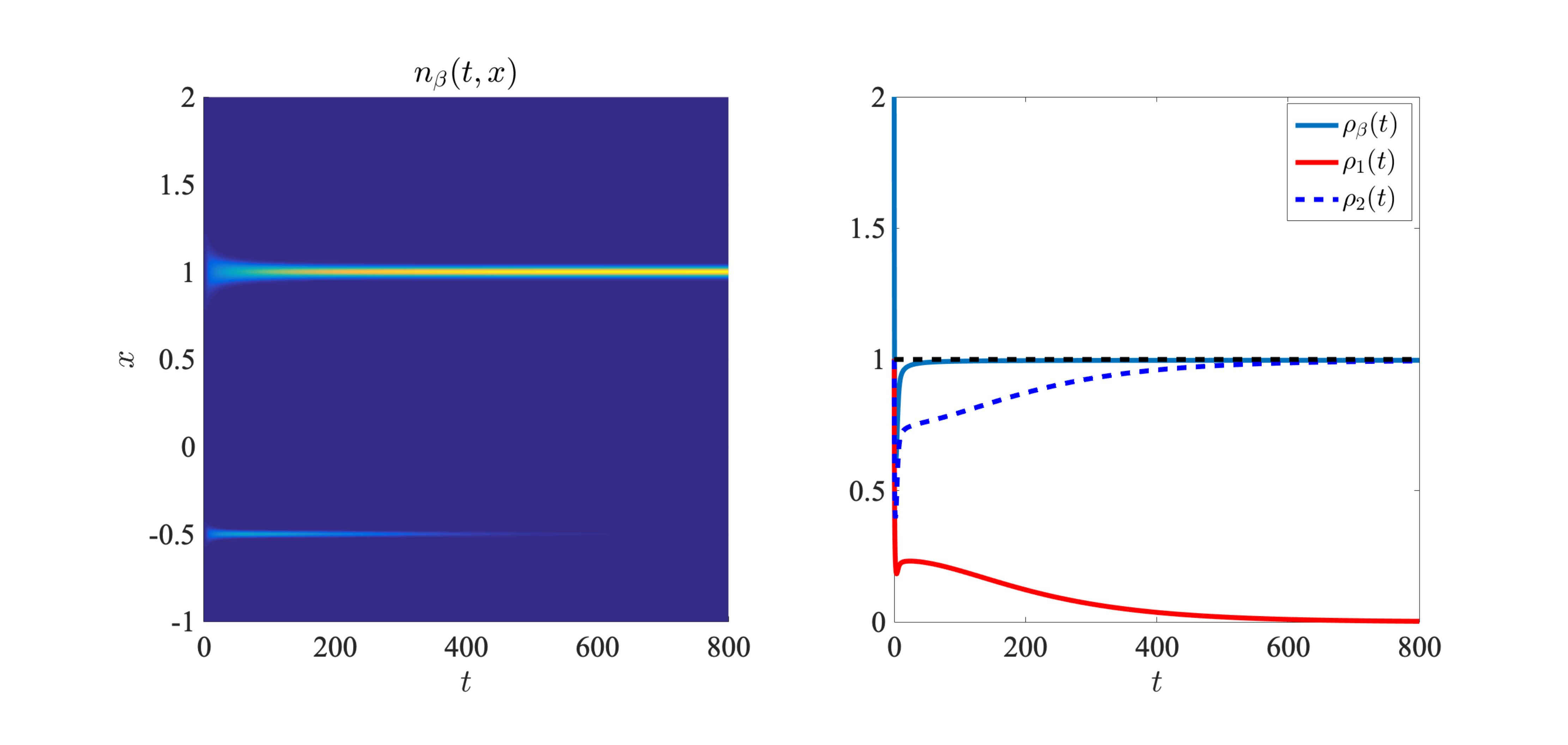}
\end{center}
\caption{Dynamics of $n_{\beta}(t,x)$ (left panel) and $\rho_{\beta}(t)$ (right panel) obtained by solving numerically the initial-boundary value problem~\eqref{BasisMut} with $\beta=10^{-6}$ and with  $n^0(x)$ and $r(x)$ defined according to~\eqref{ICnumIDE} and~\eqref{rnumIDE}. The black line in the right panel highlights the value of $r_M$, while the red line and the blue, dashed line correspond, respectively, to the integrals $\rho_1(t)$ and $\rho_2(t)$ defined according to~\eqref{rho12PDE}.}
\label{Fig4}
\end{figure}

On the other hand, the numerical results displayed in Figure~\ref{Fig4} show that, in agreement with the results of Proposition~\ref{Agreement} and Proposition~\ref{propnosym}, when $r(x)$ is defined according to~\eqref{rnumIDE} the integral $\rho_{\beta}(t)$ converges to $r_M$ while the solution $n_{\beta}(t,x)$ becomes concentrated as one single Dirac mass centred at the point $\bar{x}_2$ (left panel), which is the maximum point of the function $r(x)$ that satisfies condition~\eqref{armgminzeta1d} -- \emph{i.e.} $|r''(\bar{x}_2)| < |r''(\bar{x}_1)|$. As a consequence, the integral $\rho_1(t)$ converges to zero, whereas the integral $\rho_2(t)$ converges to $r_M$.

\newpage
\section{Research perspectives}
\label{secresper}
There are several possible generalisations of the prototypical selection model~\eqref{IDE} and selection-mutation model~\eqref{PDE} for which suitable developments of the methods used here would be relevant.

\paragraph{More general saturating non-local terms.} 
A natural way to extend our study would be to replace the saturating term $\rho(t)$ with a more general non-local term of the form $\int_\Omega K(x,y) \, n(t,y) \, {\rm d}y$, where the kernel $K(x,y)$ models the effect of competitive interactions between individuals in the phenotypic state $x$ and other individuals in a generic phenotypic state $y$. While the long-time behaviour of the IDE model with such a more general saturating non-local term was extensively studied in~\cite{Jabin2011}, where the convergence of the solution to a weighted sum of Dirac masses was also investigated depending on the properties of the kernel $K$, the existing literature still lacks a precise characterisation of the long-time behaviour of the solutions for the corresponding PDE model, with the exception of the particular case when $K(x,y) \equiv k(y)$ or similar cases~\cite{Coville2013}. While we expect that extending our results to this particular case would be relatively easy, the case of a generic kernel $K(x,y)$ is an open problem that requires a different approach compared to the one undertaken here. 

\paragraph{Integral kernel modelling phenotypic changes.}
Our results could be extended to the case where the linear diffusion operator in the non-local PDE~\eqref{PDE} is replaced by an integral term of the form $\int_\Omega \left(M(x,y) \, n(t,y) - M(y,x) \, n(t,x) \right)\, {\rm d}y$, where the kernel $M(x,y)$ models the transition of individuals from a generic phenotypic state $y$ to the phenotypic state $x$. In~\cite{Bonnefon2015} it was shown that, when phenotypic changes are modelled through such an integral kernel, the solution of selection-mutation models like the one considered here will typically converge to a measure as $t \to \infty$, and a criterion was derived to determine whether the limit measure would be singular or absolutely continuous. Suitable developments of our methods would make it possible to investigate the dependence of such a criterion on the weight of mutations compared to selection, which would be captured by a scaling parameter analogous to our parameter $\e$.

\paragraph{Systems of equations.} 
It would also be interesting to extend our results to the case of systems of IDEs of the form of~\eqref{IDE} and systems of non-local PDEs of the form of~\eqref{PDE}. In this regard, the results presented in~\cite{Pouchol2017b} for a specific system of IDEs could prove useful, since they establish the convergence of the solution to a measure as $t \to \infty$ and provide a characterisation of its support. As for systems of corresponding non-local PDEs, the convergence of the components of the solution to the principal eigenfunctions of the related elliptic differential operators when $t \to \infty$ has been proved for a two-by-two competitive system~\cite{Leman2015}. Apart from these particular cases, the long-time behaviour of the solutions of these systems of IDEs and non-local PDEs is still an open problem, which requires a different approach compared to the one undertaken here. 

\section*{Acknowledgments}
The authors would like to thank Sepideh Mirrahimi for the interesting discussions, and gratefully acknowledge Bernard Helffer for a fruitful exchange of emails regarding his semiclassical analysis results obtained in collaboration with Johannes Sj\"ostrand~\cite{Helffer1984, Helffer1985}. C.P. acknowledges support from the Swedish Foundation of Strategic Research grant AM13-004. T.L. acknowledges support of the project PICS-CNRS no. 07688.

\section*{Appendix. Proof of Theorem \ref{Theorem1} under assumption~\eqref{rm}}
\label{Appendix1}
We start by noting that $\rho(t)$ is uniformly bounded in $L^{\infty}([0,\infty))$. In fact, integrating both sides of the IDE for $n(t,x)$ over $\Omega$ and using Gr\"onwall's lemma yields 
\beq
\label{e.lbide}
\frac{\dd \rho}{\dd t} \geq (r_m- \rho) \rho \quad \Longrightarrow \quad \rho(t) \, \geq \, \min\left(r_m, \rho(0)\right) =: \rho_m \quad \forall \, t \in [0,\infty)
\eeq
and
\beq
\label{e.ubide}
\frac{\dd \rho}{\dd t} \leq (r_M - \rho) \rho \quad \Longrightarrow \quad \rho(t) \, \leq \, \max\left(r_M, \rho(0)\right) =: \rho_M \quad \forall \, t \in [0,\infty).
\eeq
Then we prove that $\rho \in BV([0,+\infty))$. In order to do this, we define 
$$
q:= \frac{\dd \rho}{\dd t} = \int_\Omega (r(x)- \rho) \, n(t,x) \, \ddx
$$
so that differentiating we obtain 
$$
\frac{\dd q}{\dd t} = \int_\Omega (r(x)- \rho)^2 n(t,x) \, \ddx - q \rho. 
$$
Multiplying both sides of the latter differential equation by $\displaystyle{- \big(\sgn (q)\big)_-}$ and estimating the right-hand side of the resulting differential equation from above we find that
\beq
\label{e.ineq1}
\frac{\dd q_-}{\dd t} \leq - \rho_m \, q_- \quad \Longrightarrow \quad \left(q(t)\right)_- \leq \left(q(0)\right)_- \, e^{- \rho_m \, t} \quad \forall \, t \in [0,\infty).
\eeq
Moreover, for any $T>0$ we have
\beq
\label{e.ineq2}
\int_0^T q(t) \, \dd t \; = \; \rho(T) - \rho(0) \; \leq \; \rho_M.
\eeq
Using estimates~\eqref{e.ineq1} and~\eqref{e.ineq2} we obtain
$$
\int_0^T \left(\frac{\dd \rho}{\dd t}\right)_+ \, \dd t \; = \; \int_0^T \frac{\dd \rho}{\dd t} \, \dd t  \; + \; \int_0^T \left(\frac{\dd \rho}{\dd t}\right)_- \, \dd t \; < \; \infty 
$$
and letting $T \to \infty$ we find
$$
\int_0^{\infty} \left(\frac{\dd \rho}{\dd t}\right)_+ \dd t \; < \; \infty.
$$
This estimate along with the fact that $\rho \in L^{\infty}([0,\infty))$ ensures that $\rho \in BV([0,+\infty))$.  

Since $\rho \in BV([0,+\infty))$ we conclude that $\rho(t)$ admits a limit $\rho^{\infty}$ as $t \to \infty$. The fact that $\rho^{\infty} = r_M$ can be proved via contradiction. Suppose that $\rho^{\infty} < r_M$ and consider $\e > 0$ such that $r(x) > r_M - \e$ for all $x \in B(x_i,\e)$, where $B(x_i,\e)$ is the ball of centre $\bar{x}_i\in \arg \max(r)$ and radius $\e$. Since $\rho(t) \to \rho^{\infty}$ as $t \to \infty$, if $\rho^{\infty} < r_M$ then for $\e$ small enough there exists $\tau_{\e} > 0$ such that $\rho(t) <  r_M - 2 \e$ for all $t \geq \tau_{\e}$. Solving the IDE~\eqref{IDE} complemented with~\eqref{defR} for $t \geq \tau_{\e}$ gives
\beq
\label{Implicitteps} 
n(t,x) = n(\tau_{\e},x)  \, \displaystyle{e^{r(x) (t - \tau_{\e}) - \int_{\tau_{\e}}^t \rho(s) \, \dd s }}.
\eeq
Integrating both sides of~\eqref{Implicitteps} over $\Omega$ and estimating from below we find
$$
\rho(t) \; \geq \; \int_{B(x_i,\e)} n(\tau_{\e},x)  \, \displaystyle{e^{(r_M - \e) (t - \tau_{\e}) - \int_{\tau_{\e}}^t \rho(s) \, \dd s }} \, \ddx \; \geq \; \displaystyle{e^{\e (t - \tau_{\e})}} \; \int_{B(x_i,\e)} n(\tau_{\e},x) \, \ddx \quad \forall \; t \geq \tau_{\e},
$$
which implies that $\rho(t)  \to \infty$ as $t \to \infty$. Thus we arrive at a contradiction. Now suppose that $\rho^{\infty} > r_M$. If so, there exist $\e>0$ sufficiently small and $\tau_{\e} > 0$ sufficiently large so that $\rho(t) >  r_M + \e$ for all $t \geq \tau_{\e}$. Solving the IDE~\eqref{IDE} complemented with~\eqref{defR} for $t \geq \tau_{\e}$ gives~\eqref{Implicitteps}. Moreover, integrating both sides of~\eqref{Implicitteps} over $\Omega$ and estimating from above yields
$$
\rho(t) \; \leq \; \int_{\Omega} n(t_{\e},x)  \, \displaystyle{e^{r_M (t - \tau_{\e}) - \int_{\tau_{\e}}^t \rho(s) \, \dd s }} \, \ddx \; \leq \; \displaystyle{e^{- \e (t - \tau_{\e})}} \; \int_{\Omega} n(\tau_{\e},x) \, \ddx \quad \forall \; t \geq \tau_{\e},
$$
which implies that $\rho(t)  \to 0$ as $t \to \infty$. Thus we arrive again at a contradiction. In so doing we have proved that $\rho^{\infty} = r_M$.

Since the sequence $(n(t, \cdot))_{t>0}$ is bounded in $L^1(\Omega)$, the Banach-Alaoglu Theorem ensures that it is relatively (weakly-$*$) compact in $\mathcal{M}^1(\overline\Omega)$. Thus we can extract a subsequence $n(t_k,\cdot) \in \mathcal{M}^1(\overline\Omega)$ such that
$$
n(t_k,\cdot) \xrightharpoonup[k  \rightarrow \infty]{} n^{\infty} \quad \text{with} \quad n^{\infty} \in \mathcal{M}^1(\overline\Omega),
$$
where the measure $n^{\infty}$ is non-negative and its total mass is $r_M$. Finally, since solving the Cauchy problem~\eqref{Basis} yields
$$
n(t_k,x) = n^0(x)  \, \displaystyle{e^{r(x) t_k - \int_{0}^{t_k} \rho(s) \, \dd s }}
$$
and $\rho(t) \to r_M$ as $t \to \infty$, we have
$$
\int_{\Omega} \varphi(x) \, n(t_k, x) \, \ddx  \xrightarrow[k\rightarrow\infty] \, 0 \quad \forall \,  \varphi \in C\big(\overline{\Omega}\big) \; \text{ s.t. } \supp(\varphi) \cap \arg \max(r) = \emptyset,
$$
which implies that
$$
n^{\infty} = r_M \, \sum_{i=1}^N a_i \, \delta(x-x_i) \quad \text{with} \quad \sum_{i=1}^N a_i =1.
$$
This concludes the proof of Theorem~\ref{Theorem1} in the case where assumption~\eqref{rm} is satisfied.

{\bibliography{ArticleSelection}
\bibliographystyle{acm}}

         \end{document}